\documentclass[11pt,reqno]{amsart}
\topmargin= .5cm
\textheight= 22.5cm
\textwidth= 32cc
\baselineskip=16pt
\usepackage{indentfirst, amssymb,amsmath,amsthm, mathrsfs,cite,graphicx,float}
\evensidemargin= .9cm
\oddsidemargin= .9cm

\newtheorem{theo}{Theorem}[section]

\newtheorem{open problem}{Open problem}[section]
\newcommand{\pa}{\partial}
\newcommand{\ol}{\overline}

\newcommand{\be}{\begin{equation}}
\newcommand{\ee}{\end{equation}}
\newcommand{\bs}{\begin{small}}
\newcommand{\es}{\end{small}}
\newcommand{\beas}{\begin{eqnarray*}}
\newcommand{\eeas}{\end{eqnarray*}}
\newcommand{\bea}{\begin{eqnarray}}
\newcommand{\eea}{\end{eqnarray}}
\renewcommand{\epsilon}{\varepsilon}
\numberwithin{equation}{section}
\begin{document}

\title[Pre-Schwarzian norm estimate]{Pre-Schwarzian norm estimation for functions in the Ma-Minda-type starlike and convex classes}
\author[R. Biswas]{Raju Biswas}
\date{}
\address{Raju Biswas, Department of Mathematics, Raiganj University, Raiganj, West Bengal-733134, India.}
\email{rajubiswasjanu02@gmail.com}
\maketitle
\let\thefootnote\relax
\footnotetext{2020 Mathematics Subject Classification: 30C45, 30C55, 30C80.}
\footnotetext{Key words and phrases: Univalent functions, starlike functions, convex functions, Ma-Minda type class, pre-Schwarzian norm.}
\begin{abstract}
In this paper, we establish the sharp estimates of the pre-Schwarzian norm of functions $f$ in the Ma-Minda type starlike and convex classes
$\mathcal{S}^*(\varphi)$ and $\mathcal{C}(\varphi)$, respectively, whenever $\varphi(z)=3/\left(3+(\alpha-3)z-\alpha z^2\right)$ with $-3<\alpha\leq 1$, $\varphi(z)=(1+z)(1-s z)$ with $-1/3\leq s\leq 1/3$ and $\varphi(z)=1+z/\left((1-z) (1+\alpha z)\right)$ with $0\leq \alpha\leq 1/2$.
\end{abstract}
\section{Introduction}
\noindent Let $\mathcal{H}$ denote the class of all analytic functions in the unit disk $\mathbb{D}:=\{z\in\mathbb{C}:|z|<1\}$ and let $\mathcal{A}$ denote the subclass of $\mathcal{H}$ consisting of functions $f$ normalized by $f(0)=f'(0)-1=0$. 
Let $\mathcal{S}=\{f\in\mathcal{A}: \text{$f$ is univalent in $\mathbb{D}$}\}$. 
A domain $\Omega$ is said to be starlike with respect to a point $z_0\in\Omega$ if the line segment that connects $z_0$ to any other point in $\Omega$ is entirely contained within $\Omega$.
Specifically, if $z_0=0$, then $\Omega$ is simply called starlike. A function 
$f\in\mathcal{A}$ is said to be starlike if $f(\mathbb{D})$ is starlike with respect to the origin. 
Let $\mathcal{S}^*$ denote the class of all starlike functions in $\mathbb{D}$. It is well-known that a function $f\in\mathcal{A}$ is in $\mathcal{S}^*$ if, and only if, $\text{Re} (zf'(z)/f(z))>0$ for $z\in\mathbb{D}$. A domain $\Omega$ is called convex if it is starlike with respect to any point 
in $\Omega$. A function $f\in\mathcal{A}$ is said to be convex if $f(\mathbb{D})$ is convex. Let  $\mathcal{C}$ denote the class of all convex functions in $\mathbb{D}$.
It is well-known that a function $f\in\mathcal{A}$ is in $\mathcal{C}$ if, and only if, $\text{Re}\left(1+zf''(z)/f'(z)\right)>0$ for $z\in\mathbb{D}$. For more in-depth information regarding these classes, we refer to \cite{D1983, G1983, TTA2018}.\\[2mm]
\indent Let $\mathcal{B}$ be the class of all analytic functions $\omega:\mathbb{D}\rightarrow\mathbb{D}$ and $\mathcal{B}_0=\{\omega\in\mathcal{B} : \omega(0)=0\}$. 
Functions in $\mathcal{B}_0$ are called Schwarz function. According to Schwarz's lemma, if $\omega\in\mathcal{B}_0$, then $|\omega(z)|\leq |z|$ and $|\omega'(0)|\leq 1$. 
Strict inequality holds in both estimates unless $\omega(z)=e^{i\theta}z$, $\theta\in\mathbb{R}$. A sharpened form of the Schwarz lemma, known as the Schwarz-Pick lemma, 
gives the estimate  $|\omega'(z)|\leq (1-|\omega(z)|^2)/(1-|z|^2)$ for $z\in\mathbb{D}$ and $\omega\in\mathcal{B}$.\\[2mm]
\indent An analytic function $f$ in $\mathbb{D}$ is said to be subordinate to an analytic function $g$ in $\mathbb{D}$, written as $f\prec g$, if there exists a function $\omega\in\mathcal{B}_0$ such that $f(z)=g(\omega(z))$ for $z\in\mathbb{D}$. Moreover, if $g$ is univalent in $\mathbb{D}$,
then $f \prec g$ if, and only if, $f(0)=g(0)$ and $f(\mathbb{D})\subseteq g(\mathbb{D})$. For basic details and results on subordination classes, we refer
to \cite[Chapter 6]{D1983}. 
Ma and Minda \cite{MM1992} have introduced more general subclasses of starlike and convex functions as follows:
\beas\mathcal{S}^*(\varphi)=\left\{f\in\mathcal{S}:\frac{zf'(z)}{f(z)}\prec\varphi(z)\right\}\quad\text{and}\quad\mathcal{C}(\varphi)=\left\{f\in\mathcal{S}:1+\frac{zf''(z)}{f'(z)}\prec\varphi(z)\right\},\eeas
where the function $\varphi :\mathbb{D}\to\mathbb{ C}$, called Ma-Minda function, is analytic and univalent in $\mathbb{D}$ such that $\varphi(\mathbb{D})$ has positive real 
part, symmetric with respect to the real axis, starlike with respect to $\varphi(0)=1$ and $\varphi'(0)>0$. A Ma-Minda function has the Taylor series expansion 
of the form $\varphi(z)=1+\sum_{n=1}^\infty a_nz^n $ $(a_1>0)$.
We call $\mathcal{S}^*(\varphi)$ and $\mathcal{C}(\varphi)$ the Ma-Minda type starlike and Ma-Minda type convex classes associated with $\varphi$, respectively.  Evidently, $\mathcal{S}^*(\varphi)\subset\mathcal{S}^*$ and $\mathcal{C}(\varphi)\subset\mathcal{C}$ for every such $\varphi$. It is known that $f\in\mathcal{C}(\varphi)$ if, and only if, $zf'\in\mathcal{S}^*(\varphi)$ (see \cite{MM1992}).\\[2mm]
\indent It is evident that different choices of the function $\varphi$ in the classes $\mathcal{S}^*(\varphi)$ and $\mathcal{C}(\varphi)$ lead to the generation of several significant subclasses of $\mathcal{S}^*$ and $\mathcal{C}$, respectively. 
If, we choose $\varphi(z)=(1+z)/(1-z)$, then $\mathcal{S}^*(\varphi)=\mathcal{S}^*$ and $\mathcal{C}(\varphi)=\mathcal{C}$. For $\varphi(z)=(1+(1-2\alpha)z)/(1-z),~0\leq \alpha<1$, we get the classes $\mathcal{S}^*(\alpha)$ of starlike function of 
order $\alpha$ and $\mathcal{C}(\alpha)$ of convex function of order $\alpha$. If $\varphi=((1+z)/(1-z))^\alpha$ for $0< \alpha\leq 1$, then 
$\mathcal{S}^*(\varphi)=\mathcal{S}\mathcal{S}^*(\alpha)$ the class of strongly starlike function of order $\alpha$ and 
$\mathcal{C}(\varphi)=\mathcal{S}\mathcal{C}(\alpha)$ the class of strongly convex function of order $\alpha$ (see \cite{S1966}). Also for  $\varphi=(1+Az)/(1+Bz),~-1\leq B<A\leq 1$, we have the classes 
of Janowski starlike functions $\mathcal{S}^*[A,B]$ and Janowski convex functions $\mathcal{C}[A,B]$ (see \cite{J1973}).
For $\varphi(z)=(1+2/\pi^2(\log(1-\sqrt{z})/(1+\sqrt{z}))^2)$ the class $\mathcal{C}(\varphi)$ (resp., $\mathcal{S}^*(\varphi)$) is the class {\it UCV} (resp. {\it UST} ) of 
normalized uniformly convex (resp. starlike) functions (see \cite{R1993,1G1991,2G1991, R1991}). Ma and Minda \cite{1MM1992, MM1993} have studied the class {\it UCV} 
extensively.\\[2mm]
\indent In this paper, we consider three different classes of functions, such as $\mathcal{S}^*_{con}=\mathcal{S}^*(\varphi)$ with $\varphi(z)=3/\left(3+(\alpha-3)z-\alpha z^2\right)$, $-3<\alpha\leq 1$, $\mathcal{S}^*_{lim}=\mathcal{S}^*(\varphi)$ with $\varphi(z)=(1+z)(1-s z)$, $-1/3\leq s\leq 1/3$ and $\mathcal{S}^*_{cs}=\mathcal{S}^*(\varphi)$ with $\varphi(z)=1+z/\left((1-z) (1+\alpha z)\right)$, $0\leq \alpha\leq 1/2$. More specifically,
\beas 
&&\mathcal{S}^*_{con}=\left\{f\in\mathcal{A} :\frac{zf'(z)}{f(z)}\prec  \frac{3}{3+(\alpha-3)z-\alpha z^2},~-3<\alpha\leq 1\right\},\\
&&\mathcal{S}^*_{lim}=\left\{f\in\mathcal{A} :\frac{zf'(z)}{f(z)}\prec (1+z)(1-s z),~-1/3\leq s\leq 1/3\right\}\\\text{and}
&&\mathcal{S}^*_{cs}=\left\{f\in\mathcal{A} :\frac{zf'(z)}{f(z)}\prec 1+\frac{z}{(1-z) (1+\alpha z)},~0\leq \alpha\leq 1/2\right\}.
\eeas
The function $\varphi(z)=3/(3+(\alpha-3)z-\alpha z^2)$ with $-3<\alpha\leq 1$ maps
the unit disk $\mathbb{D}$ onto a domain, which is geometrically similar to a conchoid with no loops, as illustrated in Figure \ref{Fig1}.
Moreover, $\varphi(\mathbb{D})$ is symmetric respecting the real axis, $\varphi$ is starlike with
respect to $\varphi(0)=1$. It is evident that $\varphi'(0)=(3-\alpha)/3>0$ and $\varphi$ has positive real part in $\mathbb{D}$. Thus, $\varphi$ satisfies the category of Ma-Minda 
functions. Furthermore, a function $f\in\mathcal{A}$ is in $\mathcal{S}^*_{con}$, $\alpha\in(-3,1]$ if, and only if, for $z\in\mathbb{D}$, the quantity $zf'(z)/f(z)$ takes all values 
on the right hand side of the curve  
\beas (u-a)\left(u^2+v^2\right)-k\left(u-\frac{1}{2}\right)^2=0,~\text{where}~a=\frac{9(1+\alpha)}{2(3+\alpha)^2}~\text{and}~k=\frac{54}{(3+\alpha)^2(3-\alpha)}.\eeas
A function $f\in \mathcal{S}^*_{con}$ if, and only if, there exists an analytic function $p$ with $p(0)=1$ and $p(z)\prec 3/(3+(\alpha-3)z-\alpha z^2)$ in $\mathbb{D}$ such that
\bea\label{e1} f(z)=z\;\mathrm{exp}\left(\int_0^z \frac{p(t)-1}{t}dt\right).\eea
If we choose $p(z)=3/(3+(\alpha-3)z-\alpha z^2)$ in \eqref{e1}, we obtain
\beas f_{\alpha,1}(z)=z\;\mathrm{exp}\left(\int_0^z \frac{3/(3+(\alpha-3)t-\alpha t^2)-1}{t}dt\right)\in\mathcal{S}^*_{con}.\eeas
\begin{figure}[H]
\begin{minipage}[c]{0.49\linewidth}
\centering
\includegraphics[scale=0.7]{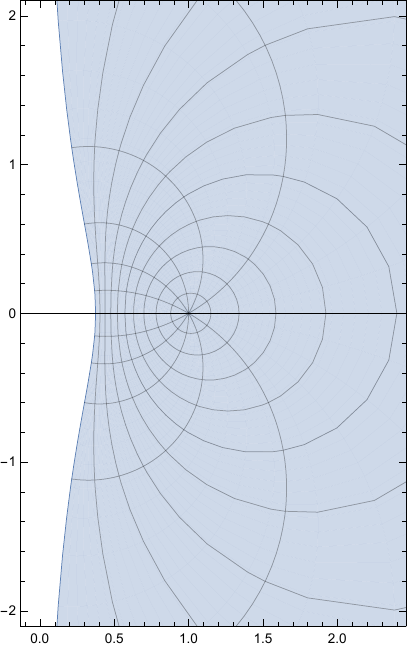}
\caption{Image of $\mathbb{D}$ under the mapping $3/\left(3+(\alpha-3)z-\alpha z^2\right)$, $\alpha=-1$}
\label{Fig1}
\end{minipage}
\begin{minipage}[c]{0.5\linewidth}
\centering
\includegraphics[scale=0.7]{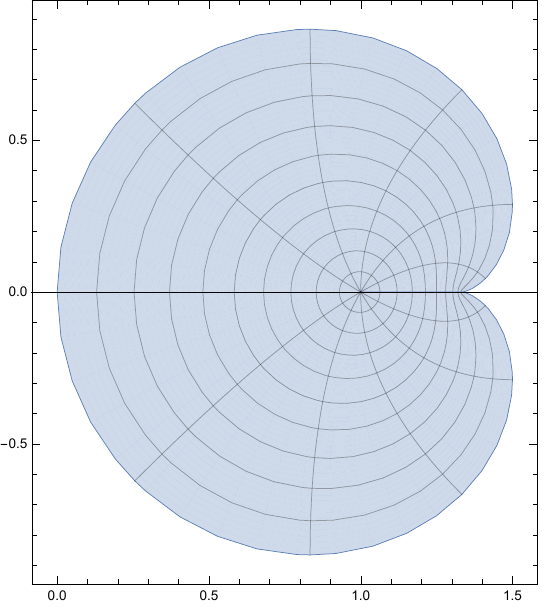}
\caption{Image of $\mathbb{D}$ under the mapping $(1+z)(1-s z)$, $s=1/3$}
\label{Fig4}
\end{minipage}
\end{figure}
\noindent The function $\varphi(z)=(1+z)(1-s z)$, $-1/3\leq s\leq 1/3$ maps
the unit disk $\mathbb{D}$ onto a Lima\c{c}on domain given by
\beas \mathcal{L}(s):=\left\{u+iv\in\mathbb{C}: \left((u-1)^2+v^2-s^2\right)^2<(1-s)^2\left((u-1-s)^2+v^2\right)\right\},\eeas 
as illustrated in Figure \ref{Fig4}.
Moreover, $\varphi(\mathbb{D})$ is symmetric respecting the real axis, $\varphi$ is starlike in the unit disk with
respect to $\varphi(0)=1$. It is evident that $\varphi'(0)=1-s>0$ and $\varphi$ has positive real part in $\mathbb{D}$. Thus, $\varphi$ satisfies the category of Ma-Minda 
functions. Furthermore, a function $f\in\mathcal{A}$ is in $\mathcal{S}^*_{lim}$, $s\in[-1/3,1/3]$ if, and only if, there exists an analytic function $p$ with $p(0)=1$ and $p(z)\prec (1+z)(1-s z)$ in $\mathbb{D}$ such that
\bea\label{ee1} f(z)=z\;\mathrm{exp}\left(\int_0^z \frac{p(t)-1}{t}dt\right).\eea
If we choose $p(z)=(1+z)(1-sz)$ in \eqref{ee1}, we obtain a function given by
\beas f_{s}(z)=z\;\mathrm{exp}\left((1-s)z-\frac{s}{2}z^2\right)\in\mathcal{S}^*_{lim}.\eeas
\noindent The function
\beas \varphi(z)=1+\frac{z}{(1-z) (1+\alpha z)}=1+\frac{1}{1+\alpha} \sum_{n=1}^\infty \left(1-(-1)^{n}\alpha^{n}\right)z^n,~\alpha\in[0,1/2]\eeas
 maps the unit disk $\mathbb{D}$ onto a onto domain bounded by cissoid of Diocles $\mathbb{CS}(\alpha)$ defined by
\beas \mathbb{CS}(\alpha):=\left\{u+i v\in\mathbb{C}: \left(u-\frac{2\alpha -1}{2(\alpha-1)}\right)\left((u-1)^2+v^2\right)+\frac{2\alpha}{(1+\alpha)^2(\alpha-1)}v^2=0\right\}\eeas
for some $0\leq \alpha\leq 1/2$, as illustrated in Figure \ref{Fig2}. Moreover, $\varphi(\mathbb{D})$ is symmetric about the real axis, $\varphi$ is starlike with
respect to the point $\varphi(0)=1$, $\varphi$ has positive real part in $\mathbb{D}$ and satisfies $\varphi'(0)=1>0$. Thus, $\varphi(z)$ satisfies the category of Ma-Minda functions.
\begin{figure}
\centering
\includegraphics[scale=0.7]{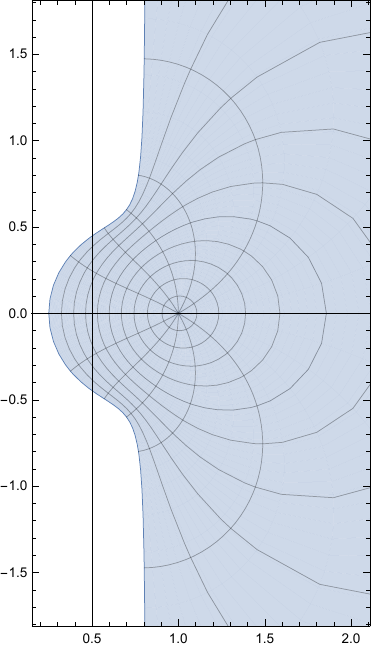}
\caption{Image of $\mathbb{D}$ under the mapping $1+z/\left((1-z) (1+\alpha z)\right)$, $\alpha=1/3$}
\label{Fig2}
\end{figure}
A function $f\in \mathcal{S}^*_{cs}$ if, and only if, there exists 
an analytic function $p$ with $p(0)=1$ and $p(z)\prec 1+z/((1-z) (1+\alpha z))$, $\alpha\in[0, 1/2]$ in $\mathbb{D}$ such that
\bea\label{e3} f(z)=z\;\mathrm{exp}\left(\int_0^z \frac{p(t)-1}{t}dt\right).\eea 
If we choose $p(z)=1+z/((1-z) (1+\alpha z))$ in \eqref{e3}, we obtain
\beas f_{\alpha,2}(z)=z\exp\left(\int_0^z \frac{dt}{(1-t) (1+\alpha t)}\right)=z+z^2+\frac{2-\alpha}{2} z^3+\frac{2 \alpha^2-5 \alpha+6}{6}  z^4+\cdots\in\mathcal{S}^*_{cs}.\eeas
The functions $f_{\alpha, 1}(z)$, $f_s(z)$ and $f_{\alpha, 2}(z)$ plays the role of extremal function for many extremal problems in the classes $\mathcal{S}^*_{con}$, 
$\mathcal{S}^*_{lim}$ and $\mathcal{S}^*_{cs}$, respectively. \\[2mm]
\indent
It is well-known that $f\in\mathcal{C}_{con}$ (resp. $\mathcal{C}_{lim}$ and $\mathcal{C}_{cs}$) if, and only if, $zf'\in\mathcal{S}^*_{con}$ (resp. $\mathcal{S}^*_{lim}$ and $\mathcal{S}^*_{cs}$), where  the classes $\mathcal{C}_{con}$, $\mathcal{C}_{lim}$ and $\mathcal{C}_{cs}$ are defined by
\beas
&&\mathcal{C}_{con}=\left\{f\in\mathcal{A}:1+\frac{zf''(z)}{f'(z)}\prec \frac{3}{3+(\alpha-3)z-\alpha z^2},~-3<\alpha\leq 1\right\},\\
&&\mathcal{C}_{lim}=\left\{f\in\mathcal{A}:1+\frac{zf''(z)}{f'(z)}\prec (1+z)(1-s z),~-1/3\leq s\leq 1/3\right\}\eeas
\beas\text{and}\quad\mathcal{C}_{cs}=\left\{f\in\mathcal{A}:1+\frac{zf''(z)}{f'(z)}\prec 1+\frac{z}{(1-z) (1+\alpha z)},~0\leq \alpha\leq 1/2\right\}.\eeas
For a more in-depth analysis of these classes, we refer to \cite{S2011, MEY2019, MN2024}.
 \section{Pre-Schwarzian Norm}
An analytic function $f(z)$ in a domain $\Omega$ is said to be locally univalent if for each $z_0\in\Omega$, there exists a neighborhood $U$ of $z_0$ such that $f(z)$ is univalent in $U$. The Jacobian of a complex-valued function $f(z)=u(x, y)+iv(x, y)$ defined by $J_f(z)=u_x v_y-u_y v_x=|f_z(z)|^2-|f_{\ol{z}}(z)|^2$, provided all the partial derivatives exist. If $f$ is analytic on a simply connected domain $\Omega$, the Jacobian takes the form $J_f(z)=|f'(z)|^2$.
It is well-known that an analytic function $f$ is locally univalent at $z_0$ if, and only if, $J_f(z_0)\not=0$ (see \cite[Chapter 1]{D1983}).
Let $\mathcal{LU}:=\{f\in\mathcal{H}:f'(z)\not= 0~~\text{ for all }~~z\in\mathbb{D}\}$. For $f\in\mathcal{LU}$, the pre-Schwarzian derivative is defined by
\beas P_f(z):=\frac{f''(z)}{f'(z)},\eeas
and the pre-Schwarzian norm (the hyperbolic sup-norm) is defined by
\beas \Vert P_f\Vert :=\sup_{z\in\mathbb{D}}\;(1-|z|^2)\left|P_f(z)\right|.\eeas
Pre-Schwarzian norm plays an important rule in the geometric function theory and Teichm\"{u}ller theory. It is well-known that if $f$ is a univalent function in $\mathbb{D}$, then 
$\Vert P_f \Vert\leq 6$. This equality is attained for the Koebe function or its rotation. One of the most widely used univalence criteria for locally univalent analytic functions is Becker's univalence criteria \cite{B1972}.
It states that if $f\in\mathcal{LU}$ and $\sup_{z\in\mathbb{D}}\left(1-|z|^2\right) \left|zP_f(z)\right|\leq1$, then $f$ is univalent in $\mathbb{D}$.
Later, the sharpness of the constant $1$ is proved by Becker and Pommerenke \cite{BP1984}. In 1976, Yamashita \cite{Y1976} 
proved that $\Vert P_f \Vert<\infty $ is finite if, and only if, $f$ is uniformly 
locally univalent in $\mathbb{D}$. Moreover, if $\Vert P_f\Vert<2$, then $f$ is bounded in $\mathbb{D}$ (see \cite{KS2002}).\\[2mm]
\indent 
In the field of geometric function theory, numerous researchers have investigated the pre-Schwarzian norm for various subclasses of analytic and univalent functions.
In 1998, Sugawa \cite{S1998} established the sharp estimate of the pre-Schwarzian norm for strongly starlike functions of order $\alpha$ ($0<\alpha\leq 1$).
In $1999$, Yamashita \cite{Y1999} proved that $\Vert P_f\Vert \leq 6-4\alpha$ for $f\in\mathcal{S}^*(\alpha)$ and $\Vert P_f\Vert \leq 4(1-\alpha)$ for $f\in\mathcal{C}(\alpha)$, 
where $0\leq \alpha<1$ and both the estimates are sharp. In $2000$, Okuyama \cite{O2000} established the sharp estimate of the pre-Schwarzian norm for $\alpha$-spirallike functions. Kim and Sugawa \cite{KS2006} established the sharp 
estimate of the pre-Schwarzian norm for Janowski convex functions (see also \cite{PS2008}). Ponnusamy and Sahoo \cite{PS2010} 
obtained the sharp estimates of the pre-Schwarzian norm for functions in the class
$\mathcal{S}^*[\alpha,\beta]:=S^*\left(\left((1+(1-2\beta)z)/(1-z)\right)^\alpha\right)$, where $0<\alpha\leq 1$ and $0\leq \beta<1$. In $2014$, Aghalary and Orouji 
\cite{AO2014} obtained the sharp estimate of the pre-Schwarzian norm for $\alpha$-spirallike function of order $\rho\in[0,1)$, where $\alpha\in(-\pi/2,\pi/2)$. The pre-
Schwarzian norm of certain integral transform of $f$ for certain subclass of $f$ has been also studied in the literature. For a more in-depth investigation, we refer to 
\cite{KPS2004,PPS2008,PS2008, AP2023,1AP2023,2AP2023,AP2024,CKPS2005,ABM2025} and the references therein.\\[2mm]
\indent In this paper, we establish sharp estimates of the pre-Schwarzian norms for functions in the Ma-Minda type starlike 
classes $\mathcal{S}^*_{con}$, $\mathcal{S}^*_{lim}$ and $\mathcal{S}^*_{cs}$, and in the Ma-Minda type convex classes $\mathcal{C}_{con}$, $\mathcal{C}_{lim}$ and $\mathcal{C}_{cs}$.
\section{Main results}
\noindent In the following result, we establish the sharp estimate of the pre-Schwarzian norm for functions in the Ma-Minda type starlike class $\mathcal{S}^*_{con}$.
\begin{theo}\label{Th2}
Let $f\in\mathcal{S}^*_{con}$. Then the pre-Schwarzian norm satisfies the following sharp inequality
\beas\Vert P_f\Vert \leq \frac{2( \alpha+6)}{3+\alpha}.\eeas
\end{theo}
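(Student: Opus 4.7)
My plan is to begin by writing the subordination $zf'(z)/f(z) = \varphi(\omega(z))$ for some $\omega \in \mathcal{B}_0$, where $\varphi(w) = 3/((1-w)(3+\alpha w))$, and taking a logarithmic derivative to obtain
\[
P_f(z) = \frac{\varphi(\omega(z)) - 1}{z} + \frac{\varphi'(\omega(z))\,\omega'(z)}{\varphi(\omega(z))}.
\]
A direct calculation gives the convenient factorisations
\[
\frac{\varphi(w) - 1}{w} = \frac{(3-\alpha) + \alpha w}{(1-w)(3+\alpha w)}, \qquad \frac{\varphi'(w)}{\varphi(w)} = \frac{(3-\alpha) + 2\alpha w}{(1-w)(3+\alpha w)}.
\]

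Setting $w = \omega(z)$ and combining Schwarz's lemma (so that $(1-|z|^2)|\omega(z)|/|z| \leq 1 - |w|^2$ whenever $|w| \leq |z|$) with the Schwarz--Pick inequality $(1-|z|^2)|\omega'(z)| \leq 1 - |w|^2$, the estimate reduces to proving that
\[
H(w) := (1-|w|^2)\,\frac{|(3-\alpha) + \alpha w| + |(3-\alpha) + 2\alpha w|}{|1-w|\,|3+\alpha w|} \leq \frac{2(\alpha+6)}{3+\alpha} \quad \text{for every } w \in \mathbb{D}.
\]

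I would bound $H(w)$ by splitting it into the contributions from $L_f(w) := (\varphi(w)-1)/w$ and $M(w) := \varphi'(w)/\varphi(w)$. For $M$, since $\text{Re}\,\varphi > 0$, the map $\psi = (1-\varphi)/(1+\varphi)$ is a Schwarz function with $\psi(0) = 0$; the identity $\varphi'/\varphi = -2\psi'/(1-\psi^2)$, combined with Schwarz--Pick applied to $\psi$ and the reverse-triangle estimate $|1-\psi^2| \geq 1 - |\psi|^2$, delivers $(1-|w|^2)|M(w)| \leq 2$. For $L_f$, I would combine the elementary bound $(1-|w|^2)/|1-w| \leq 2$ (coming from $(1-|w|^2)/|1-w|^2 = \text{Re}((1+w)/(1-w)) \leq |1+w|/|1-w|$ and $|1+w| \leq 2$) with the pointwise inequality $|(3-\alpha) + \alpha w|/|3+\alpha w| \leq 3/(3+\alpha)$ on $\mathbb{D}$. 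After squaring, the latter reduces to the polynomial inequality
\[
(18 - \alpha^2) + 2(\alpha^2 + 3\alpha - 9)\,\text{Re}(w) - \alpha(\alpha+6)|w|^2 \geq 0 \quad \text{for } |w| \leq 1;
\]
since $\alpha^2 + 3\alpha - 9 < 0$ throughout $(-3,1]$, the left-hand side is minimised in $\arg w$ at $\text{Re}(w) = |w|$, leaving a quadratic in $|w|$ that vanishes at $|w| = 1$ and is nonnegative on $[0,1]$, as a sign analysis of its other root shows. Adding the two bounds gives $H(w) \leq 2 + 6/(3+\alpha) = 2(\alpha+6)/(\alpha+3)$.

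For sharpness I take $\omega(z) = z$, which corresponds to $f = f_{\alpha,1}$: every Schwarz-type inequality used above becomes an equality, and a direct computation yields $(1-r^2)|P_{f_{\alpha,1}}(r)| = (1+r)(6 - 2\alpha + 3\alpha r)/(3 + \alpha r) \to 2(\alpha+6)/(\alpha+3)$ as $r \to 1^-$. The main obstacle I anticipate is the polynomial inequality in the bound on $L_f$: it is the only step where the range $-3 < \alpha \leq 1$ enters essentially, and reducing the two-variable problem to a univariate quadratic via the sign of $\alpha^2 + 3\alpha - 9$ is the key observation that closes the argument.
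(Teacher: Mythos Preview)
Your proof is correct and takes a genuinely different route from the paper's. The paper bounds $|P_f|$ using the positivity of the Taylor coefficients of $\varphi$ and of $\varphi'/\varphi$, reduces to the two–variable function
\[
G_1(r,t)=\frac{(3-\alpha+2\alpha t)(1-t^2)}{3+(\alpha-3)t-\alpha t^2}+\frac{1-r^2}{r}\Bigl(\frac{3}{3+(\alpha-3)t-\alpha t^2}-1\Bigr),\qquad 0<t\le r<1,
\]
collapses it to the single–variable function $G_2(t)=(1+t)(6-2\alpha+3\alpha t)/(3+\alpha t)$, and then runs a four–part case analysis in $\alpha$ (including root locations of $G_2'$ and a graphical check) to prove that $G_2$ is increasing on $(0,1)$. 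You instead split $P_f$ into the pieces $L_f(w)=(\varphi(w)-1)/w$ and $M(w)=\varphi'(w)/\varphi(w)$ and bound them separately: the estimate $(1-|w|^2)|M(w)|\le 2$ follows from the Carath\'eodory property $\operatorname{Re}\varphi>0$ via $\psi=(1-\varphi)/(1+\varphi)$ and Schwarz--Pick, and therefore uses nothing specific about $\varphi$; the remaining inequality $|(3-\alpha)+\alpha w|\le \tfrac{3}{3+\alpha}|3+\alpha w|$ is the only place the range $-3<\alpha\le 1$ enters, and you reduce it cleanly to a quadratic in $|w|$ via the sign of $\alpha^2+3\alpha-9$. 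The net effect is a more modular argument with no case splitting in $\alpha$, at the cost of having to verify one explicit polynomial inequality; the paper's approach is more computational but entirely self-contained. Your sharpness argument (taking $\omega(z)=z$ and letting $r\to 1^-$) coincides with the paper's.
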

\begin{proof}
If $f\in\mathcal{S}^*_{con}$, it follows from the definition of the class $\mathcal{S}^*_{con}$ that
\beas\frac{zf'(z)}{f(z)}\prec \frac{3}{3+(\alpha-3)z-\alpha z^2}.\eeas
Thus, there exists a Schwarz function $\omega(z)\in\mathcal{B}_0$ such that
\beas\frac{zf'(z)}{f(z)}=\frac{3}{3+(\alpha-3)\omega(z)-\alpha (\omega(z))^2}.\eeas
Taking logarithmic derivative on both sides with respect to $z$, we obtain
\beas P_f(z)=\frac{f''(z)}{f'(z)}=\frac{\left(3-\alpha+2\alpha \omega(z)\right)\omega'(z)}{3+(\alpha-3)\omega(z)-\alpha (\omega(z))^2}+\frac{1}{z}\left(\frac{3}{3+(\alpha-3)\omega(z)-\alpha (\omega(z))^2}-1\right).\eeas
Note that the function 
\be\label{g1}\frac{3}{3+(\alpha-3)z-\alpha z^2}=\frac{3}{\alpha+3} \left(\frac{\alpha}{\alpha z+3}+\frac{1}{1-z}\right)=\frac{3}{\alpha+3} \sum_{n=0}^\infty \left(1+(-1)^{n}\left(\frac{\alpha}{3}\right)^{n+1}\right)z^n\ee
has the positive Taylor coefficients about $z = 0$ for $-3<\alpha\leq 1$.
In view of Schwarz-Pick lemma, we have 
\beas
(1-|z|^2)|P_f(z)|&=& (1-|z|^2)\left|\frac{\left(3-\alpha+2\alpha \omega(z)\right)\omega'(z)}{3+(\alpha-3)\omega(z)-\alpha (\omega(z))^2}\right.\\[2mm]
&&\left.+\frac{1}{z}\left(\frac{3}{3+(\alpha-3)\omega(z)-\alpha (\omega(z))^2}-1\right)\right|\\
&\leq&\frac{(1-|z|^2)\left(3-\alpha+2\alpha |\omega(z)|\right)|\omega'(z)|}{3+(\alpha-3)|\omega(z)|-\alpha |\omega(z)|^2}\\[2mm]
&&+\frac{(1-|z|^2)}{|z|}\left(\frac{3}{3+(\alpha-3)|\omega(z)|-\alpha |\omega(z)|^2}-1\right)\\[2mm]
&\leq&\frac{\left(3-\alpha+2\alpha |\omega(z)|\right)\left(1-|\omega(z)|^2\right)}{3+(\alpha-3)|\omega(z)|-\alpha |\omega(z)|^2}\\[2mm]
&&+\frac{(1-|z|^2)}{|z|}\left(\frac{3}{3+(\alpha-3)|\omega(z)|-\alpha |\omega(z)|^2}-1\right).\eeas
For $0<t:=|\omega(z)|\leq |z|<1$, we have
\beas(1-|z|^2)|P_f(z)|\leq \frac{\left(3-\alpha+2\alpha t\right)\left(1-t^2\right)}{3+(\alpha-3)t-\alpha t^2}+\frac{(1-|z|^2)}{|z|}\left(\frac{3}{3+(\alpha-3)t-\alpha t^2}-1\right).\eeas
Therefore, the pre-Schwarzian norm for functions $f$ in the class $\mathcal{S}^*_{con}$ is
\bea\label{e9}\Vert P_f\Vert=\sup_{z\in\mathbb{D}}\;(1-|z|^2)|P_f(z)|\leq \sup_{0< t\leq|z|<1}G_1(|z|, t),\eea
where 
\beas G_1(r, t)=\frac{\left(3-\alpha+2\alpha t\right)\left(1-t^2\right)}{3+(\alpha-3)t-\alpha t^2}+\frac{(1-r^2)}{r}\left(\frac{3}{3+(\alpha-3)t-\alpha t^2}-1\right)~ \text{for}\quad |z|=r.\eeas
The objective is to ascertain the supremum of $G_1(r, t)$ over $\Omega=\{(r, t): 0< t\leq r<1\}$.
Differentiating partially $G_1(r, t)$ with respect to $r$, we obtain
\beas\frac{\pa}{\pa r}G_1(r, t)=-\left(\frac{1}{r^2}+1\right)\left(\frac{3}{3+(\alpha-3)t-\alpha t^2}-1\right)<0,\eeas
which shows that $G_1(r, t)$ is a monotonically decreasing function of $r\in[t,1)$. Hence, we have $G_1(r, t) \leq G_1(t, t)=G_2(t)$,
where
\bea\label{e10} G_2(t)
&=&\frac{(t+1) (6-2\alpha+3 \alpha t)}{3+\alpha t},\quad t\in(0,1).\eea
Now we consider the following cases.\\
{\bf Case 1.} If $\alpha=0$, then from (\ref{e10}), we have $G_2(t)=2(1+t)$. Hence, we have $\Vert P_f\Vert\leq \lim_{t\to1^-}G_2(t)=4$.\\
 {\bf Case 2.} If $\alpha\not=0$.
Differentiate $G_2(t)$ with respect to $t$, we obtain
\beas G_2'(t)&=&\frac{3 \alpha^2 t^2+18 \alpha t+2 \alpha^2-3 \alpha+18}{(3+\alpha t)^2}.\eeas
The roots of the equation $G_2'(t)=0$ are
\beas t_1=\frac{-9 \alpha-\sqrt{3} \sqrt{9 \alpha^2+3 \alpha^3-2 \alpha^4}}{3 \alpha^2}\quad\text{and}\quad t_2=\frac{-9 \alpha+\sqrt{3} \sqrt{9 \alpha^2+3 \alpha^3-2 \alpha^4}}{3 \alpha^2}.\eeas
It is evident that $9 \alpha^2+3 \alpha^3-2 \alpha^4\geq 0$ for $-3/2\leq \alpha\leq 1$.\\
{\bf Sub-case 2.1.} If $-3<\alpha <-3/2$, then the equation $G_2'(t)=0$ has no real root and it follows that $G_2'(1/2)=(72 + 24 \alpha + 11 \alpha^2)/(6 + \alpha)^2>0$ for $-3<\alpha <-3/2$. Hence, $G_2'(t)>0$ for $t\in(0,1)$, which shows that $G_2(t)$ is a monotonically increasing function of $t$. From (\ref{e9}), we have 
\beas \Vert P_f\Vert\leq \lim_{t\to 1^-}G_2(t)=\frac{2( \alpha+6)}{3+\alpha}.\eeas
{\bf Sub-case 2.2.} Let $-3/2\leq \alpha<0$.  Then, the inequality $-9 \alpha-\sqrt{3} \sqrt{9 \alpha^2+3 \alpha^3-2 \alpha^4}>0$ is equivalent to $3 \alpha^2 (18 - 3 \alpha + 2 \alpha^2)>0$, which is true for $\alpha\in[-3/2,0)$. Therefore, both the roots $t_1$ and $t_2$ are non-negative. We claim that, both $t_1$ and $t_2$ are greater than $1$. Note that 
$t_1>1$ is equivalent to
\beas H_1(\alpha):=108 \alpha^2+9 \alpha^3-15 \alpha^4+18 \sqrt{3}\alpha \sqrt{9 \alpha^2+3 \alpha^3-2 \alpha^4}>0,\eeas
which is true for $\alpha\in[-3/2,0)$, as illustrated in Figure \ref{Fig3}. Similarly, $t_2>1$ is equivalent to
\beas H_2(\alpha):=108 \alpha^2+9 \alpha^3-15 \alpha^4-18 \sqrt{3}\alpha \sqrt{9 \alpha^2+3 \alpha^3-2 \alpha^4}>0,\eeas
which is true for $\alpha\in[-3/2,0)$, as illustrated in Figure \ref{Fig3}.
\begin{figure}[H]
\centering
\includegraphics[scale=0.9]{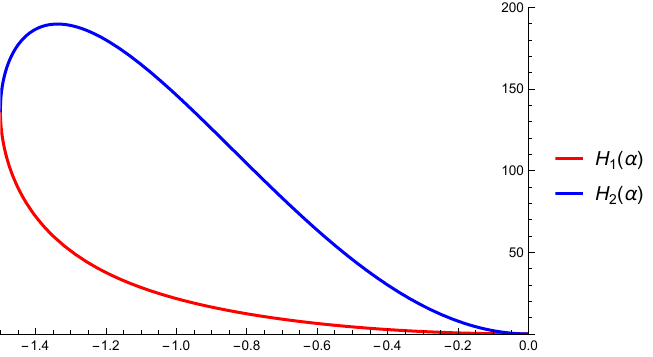}
\caption{Graph of $H_1(\alpha)$ and $H_2(\alpha)$ for $\alpha$ in $[-3/2,0)$}
\label{Fig3}
\end{figure}
\noindent Therefore, the equation $G_2'(t)=0$ has no positive root in $(0, 1)$ and it follows that $G_2'(1/2)=(72 + 24 \alpha + 11 \alpha^2)/(6 + \alpha)^2>0$ for $-3/2\leq \alpha <0$. Hence, $G_2'(t)>0$ for $t\in(0,1)$, which shows that $G_2(t)$ is a monotonically increasing function of $t$. From (\ref{e9}), we have 
\beas \Vert P_f\Vert\leq \lim_{t\to 1^-}G_2(t)=\frac{2( \alpha+6)}{3+\alpha}.\eeas
{\bf Sub-case 2.3.} Let $0< \alpha\leq 1$.  It is evident that $t_1<0$. We claim that $t_2<0$. Note that 
$t_2<0$ is equivalent to
\beas -3 \left(2\alpha^4-3\alpha^3+18\alpha^2\right)<0,\eeas
which is true for $\alpha\in(0, 1]$. Therefore, the equation $G_2'(t)=0$ has no positive root in $(0, 1)$ and it follows that $G_2'(1/2)=(72 + 24 \alpha + 11 \alpha^2)/(6 + \alpha)^2>0$ for $0<\alpha \leq 1$. Hence, $G_2'(t)>0$ for $t\in(0,1)$, which shows that $G_2(t)$ is a monotonically increasing function of $t$. From (\ref{e9}), we have 
\beas \Vert P_f\Vert\leq \lim_{t\to 1^-}G_2(t)=\frac{2( \alpha+6)}{3+\alpha}.\eeas
From Case 1 and Case 2, we have 
\beas\Vert P_f\Vert\leq 
\dfrac{2( \alpha+6)}{3+\alpha}.\eeas
\indent To show that the estimate is sharp, we consider the function $f_1$ given by
\beas f_1(z)=z\;\mathrm{exp}\left(\int_0^z\frac {3 - \alpha + \alpha t} {(1 - t) (3 + \alpha t)} dt\right).\eeas
Differentiate $f_1(z)$ twice with respect to $z$, we obtain
\beas f_1'(z)&=&\frac{3}{(1-z) (3+\alpha z)}\;\mathrm{exp}\left(\int_0^z\frac {3 - \alpha + \alpha t} {(1 - t) (3 + \alpha t)} dt\right)\quad\text{and}\\[2mm]
 f_1''(z)&=&\frac{3 (6-2\alpha+3\alpha z)}{(1-z)^2 (3+\alpha z)^2}\;\mathrm{exp}\left(\int_0^z\frac {3 - \alpha + \alpha t} {(1 - t) (3 + \alpha t)} dt\right)\eeas
The pre-Schwarzian norm of $f_1$ is given by
\beas \Vert P_{f_1}\Vert =\sup_{z\in\mathbb{D}}\;(1-|z|^2)|P_{f_1}(z)|=\sup_{z\in\mathbb{D}}\;(1-|z|^2)\left|\frac{(6-2\alpha+3\alpha z)}{(1-z) (3+\alpha z)}\right|.\eeas
On the positive real axis, we note that
\beas
\sup_{0\leq r<1}\left((1-r^2)\frac{(6-2\alpha+3\alpha r)}{(1-r) (3+\alpha r)}\right)=
\dfrac{2( \alpha+6)}{3+\alpha}\quad\text{for}\quad \alpha\in(-3, 1].\eeas
This completes the proof.
\end{proof}
In the following result, we establish the sharp estimate of the pre-Schwarzian norm for functions $f$ in the Ma-Minda type starlike class $\mathcal{S}^*_{lim}$.
\begin{theo} Let $f\in\mathcal{S}^*_{lim}$. Then the pre-Schwarzian norm satisfies the following sharp inequality
\beas\Vert P_f\Vert \leq \frac{(1+t_s)\left(1-s+2s t_s\right)}{1+s t_s}+(1-t_s^2)\left(1-s+s t_s\right),\eeas
where $t_s\in(0, 1)$ is the unique root of the equation 
\beas -3 s^3 t^4+\left(2 s^3-8 s^2\right) t^3+\left(s^3+6 s^2-7 s\right) t^2+\left(2 s^2+6 s-2\right) t+s^2+s+1=0.\eeas
\end{theo}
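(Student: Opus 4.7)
The plan is to mirror the strategy of Theorem \ref{Th2}. From $f \in \mathcal{S}^*_{lim}$ there exists a Schwarz function $\omega \in \mathcal{B}_0$ with $zf'(z)/f(z) = (1+\omega(z))(1-s\omega(z))$. Taking the logarithmic derivative of both sides and solving for $f''/f'$ produces
\beas
P_f(z) = \frac{(1-s-2s\omega(z))\,\omega'(z)}{(1+\omega(z))(1-s\omega(z))} + \frac{1}{z}\bigl[(1+\omega(z))(1-s\omega(z)) - 1\bigr].
\eeas

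Next, I apply the Schwarz--Pick inequality $(1-|z|^2)|\omega'(z)| \leq 1 - |\omega(z)|^2$ together with $|\omega(z)| \leq |z|$. Writing $r = |z|$ and $t = |\omega(z)|$, the crucial step is to establish that the worst case over the circle $|\omega| = t$ occurs at $\omega = -t$; parametrizing $\omega = t e^{i\theta}$ and examining the joint modulus as a function of $\cos\theta$ should place the maximum at $\theta = \pi$, driven by the behaviour of the first (Schwarzian-type) term whose variation dominates. Substituting $\omega = -t$ then yields
\beas
(1-|z|^2)|P_f(z)| \leq \frac{(1+t)(1-s+2st)}{1+st} + \frac{(1-r^2)\,t\,(1-s+st)}{r}.
\eeas
Since $(1-r^2)/r$ is decreasing on $(0,1)$, for fixed $t$ the right-hand side is maximized at $r = t$, reducing the problem to computing $\sup_{t \in (0,1)} H(t)$ where
\beas
H(t) := \frac{(1+t)(1-s+2st)}{1+st} + (1-t^2)(1-s+st).
\eeas

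A direct calculation of $H'(t)$ followed by multiplication through by $(1+st)^2$ yields, after collecting powers of $t$, exactly the quartic polynomial $Q(t)$ displayed in the statement. Evaluating at the endpoints, $Q(0) = s^2+s+1 > 0$ (the quadratic $s^2+s+1$ being positive definite) and $Q(1) = s^2 - 1 < 0$ for $s \in [-1/3, 1/3]$, so by the intermediate value theorem $Q$ admits at least one root in $(0,1)$. The main obstacle I anticipate is proving \emph{uniqueness} of this root --- equivalently, unimodality of $H$ on $(0,1)$ --- for every admissible $s$. I plan a case split analogous to Theorem \ref{Th2}: the degenerate case $s = 0$ gives $H(t) = 2 + t - t^2$ with unique maximum at $t = 1/2$ (matching $Q(t) = 1-2t$), while $s > 0$ and $s < 0$ each require a careful sign analysis of $Q'(t)$ or Descartes-style counting of sign changes of the coefficients of $Q$, paralleling the sub-case structure used for the conchoid-type class.

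Finally, for sharpness I use the extremal function $f_s(z) = z\exp\bigl((1-s)z - (s/2)z^2\bigr) \in \mathcal{S}^*_{lim}$ already introduced. Since $zf_s'(z)/f_s(z) = (1+z)(1-sz)$, a direct computation gives
\beas
P_{f_s}(z) = \frac{1-s-2sz}{(1+z)(1-sz)} + (1-s) - sz,
\eeas
and evaluating $(1-|z|^2)|P_{f_s}(z)|$ at $z = -t_s$ reproduces precisely the right-hand side of the claimed inequality, so the bound is attained.
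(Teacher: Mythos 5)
Your outline reproduces the paper's architecture (same decomposition of $P_f$ via the Schwarz function, Schwarz--Pick, monotonicity in $r$ to reduce to $r=t$, the quartic from $H'(t)$, and essentially the same extremal function), but the decisive estimate is only asserted, not proved. After splitting the two terms and applying Schwarz--Pick, you must dominate, on the circle $|\omega|=t$, the quantities $\frac{|1-s-2s\omega|}{|(1+\omega)(1-s\omega)|}$ and $|(1-s)\omega-s\omega^2|$ by their values at $\omega=-t$. For $s\in[0,1/3]$ this is routine and is how the paper argues: the numerators by the triangle inequality ($|1-s-2s\omega|\le 1-s+2st$ and $|(1-s)\omega-s\omega^2|\le (1-s)t+st^2$, all coefficients nonnegative), and the factor $\frac{1}{(1+\omega)(1-s\omega)}$ by noting that its Taylor coefficients have the same absolute values as those of $\frac{1}{(1-z)(1+sz)}$, so its modulus is at most $\frac{1}{(1-t)(1+st)}$. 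For $s\in[-1/3,0)$, however, both numerator moduli are maximized at $\omega=+t$, with values $1-s+2|s|t$ and $t\left(1-s+|s|t\right)$, strictly larger than the values $1-s-2|s|t$ and $t\left(1-s-|s|t\right)$ that you substitute at $\omega=-t$; hence ``the maximum sits at $\theta=\pi$'' is not a consequence of any termwise estimate, and your appeal to ``the first term's variation dominates'' is exactly the statement that needs a uniform proof in $t\in(0,1)$ and $s\in[-1/3,0)$ (one must show the shrinking of $|(1+\omega)(1-s\omega)|$ at $\omega=-t$ outweighs the loss in both numerators). You flag this with ``should'' and never carry it out, yet it is where the substance of the theorem lies; note also that you cannot simply cite the paper here, since its displayed intermediate bound uses the same $s\ge 0$ coefficient signs and does not spell out the negative-$s$ comparison either.

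The other deferred item, uniqueness of $t_s$ and the fact that $H$ is maximized there, you propose to settle by Descartes-type sign counting, which will be unpleasant because the signs of the coefficients of $Q$ change with $s$. The paper's route is cleaner and you should adopt it: show $H''(t)<0$ on $(0,1)$ for all $|s|\le 1/3$, so $H'$ is strictly decreasing; combined with your (correct) endpoint values $Q(0)=1+s+s^2>0$ and $Q(1)=s^2-1<0$, this gives a unique zero $t_s$ of $H'$, which is then the global maximum of $H$. Your sharpness check is fine and agrees with the paper's up to the substitution $z\mapsto -z$: with $zf_s'(z)/f_s(z)=(1+z)(1-sz)$ one gets $P_{f_s}(z)=(1-s)-sz+\frac{1-s-2sz}{(1+z)(1-sz)}$, and at $z=-t_s$ both summands are positive reals, so $(1-t_s^2)\,|P_{f_s}(-t_s)|=H(t_s)$; together with the (still to be completed) upper bound this gives equality of the norm.
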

\begin{proof}
If $f\in\mathcal{S}^*_{lim}$, it follows from the definition of the class $\mathcal{S}^*_{lim}$ that
\beas\frac{zf'(z)}{f(z)}\prec (1-s z)(1+z).\eeas
Thus, there exists a Schwarz function $\omega\in\mathcal{B}_0$ such that
\beas\frac{zf'(z)}{f(z)}=(1-s\omega(z))(1+\omega(z)).\eeas
Taking logarithmic derivative on both sides with respect to $z$, we obtain
\beas
\frac{f''(z)}{f'(z)}
&=&\frac{(1-s-2s\omega(z))\omega'(z)}{(1+\omega(z))(1-s\omega(z))}+\frac{1}{z}\left((1-s\omega(z))(1+\omega(z))-1\right).\eeas
Note that $|\omega(z)|\leq |z|<1$ and the Taylor  series expansion of the function 
\beas \frac{1}{(1+z) (1-s z)}
&=&\sum_{n=0}^\infty \frac{\left((-1)^n+s^{n+1}\right) }{1+s}z^n\\
&=&1-(1-s) z+\left(1-s+s^2\right) z^2-\left(1-s+s^2-s^3\right) z^3+O\left(z^4\right).\eeas
Hence, we have  
\beas \left|\frac{1}{(1+z) (1-s z)}\right|&\leq &1+(1-s)|z|+\left(1-s+s^2\right) |z|^2+\left(1-s+s^2-s^3\right) |z|^3\\
&&+O\left(|z|^4\right)\\
&=&\frac{1}{(1-|z|) (1+s |z|)}.\eeas
In view of  Schwarz-Pick lemma, we have
\beas (1-|z|^2)|P_f(z)|&=&(1-|z|^2)\left|\frac{(1-s-2s\omega(z))\omega'(z)}{(1+\omega(z))(1-s\omega(z))}+\frac{(1-s\omega(z))(1+\omega(z))-1}{z}\right|\\[2mm]
&\leq& (1-|z|^2)\left(\frac{|\omega'(z)|\left(1-s+2s|\omega(z)|\right)}{(1-|\omega(z)|)(1+s|\omega(z)|)}+\frac{(1-s)|\omega(z)|+s|\omega(z)|^2}{|z|}\right)\\[2mm]
&\leq& \frac{(1+|\omega(z)|)\left(1-s+2s|\omega(z)|\right)}{(1+s|\omega(z)|)}+\frac{(1-|z|^2)\left((1-s)|\omega(z)|+s|\omega(z)|^2\right)}{|z|}.\eeas
For $0< t:=|\omega(z)|\leq |z|<1$, we have
\beas (1-|z|^2)|P_f(z)|\leq \frac{(1+t)\left(1-s+2s t\right)}{(1+st)}+\frac{(1-|z|^2)\left((1-s)t+st^2\right)}{|z|}.\eeas
Therefore, the pre-Schwarzian norm for functions $f$ in the class $\mathcal{S}^*_{lim}$ is
\bea \label{g4}\Vert P_f\Vert=\sup_{z\in\mathbb{D}}\; (1-|z|^2)|P_f(z)|\leq \sup_{0< t\leq |z|<1} G_3(|z|,t),\eea
where 
\beas G_3(r, t)=\frac{(1+t)\left(1-s+2s t\right)}{(1+st)}+\frac{(1-r^2)\left((1-s)t+st^2\right)}{r}\quad\text{for}\quad |z|=r\in(0, 1).\eeas
The objective is to determine the supremum of $G_3(r, t)$ on $\Omega=\{(r, t): 0< t\leq r<1\}$.
Differentiate $G_3(r, t)$ partially with respect to $r$, we obtain
\beas\frac{\pa}{\pa r}G_3(r, t)=-\left(\frac{1}{r^2}+1\right)\left((1-s)t+st^2\right)<0,\eeas
which shows that $G_3(r, t)$ is a monotonically decreasing function of $r\in[t,1)$. Hence, we have $G_3(r, t) \leq G_3(t, t)=G_4(t)$,
where
\beas G_4(t)=\frac{(1+t)\left(1-s+2s t\right)}{(1+st)}+(1-t^2)\left(1-s+st\right),\quad t\in(0,1).\eeas
Differentiate $G_4(t)$ twice with respect to $t$, we obtain
\beas G_4'(t)&=&\frac{-3 s^3 t^4+\left(2 s^3-8 s^2\right) t^3+\left(s^3+6 s^2-7 s\right) t^2+\left(2 s^2+6 s-2\right) t+s^2+s+1}{(1+s t)^2}\\[2mm]
G_4''(t)&=&\frac{-2\left(3 s^4 t^4+\left(10 s^3-s^4\right) t^3+\left(12 s^2-3 s^3\right) t^2+\left(6 s-3 s^2\right) t+s^3-2 s+1\right)}{(1+s t)^3}.\eeas
Therefore, we have $G_4''(t)<0$ for $t\in(0, 1)$ and $s\in[-1/3,1/3]$. Therefore, $G_4'(t)$ is a monotonically decreasing function of $t\in(0,1)$ with $\lim_{t\to0^+}G_4'(t)=1 + s + s^2>0$ and $\lim_{t\to1^-}G_4'(t)=(-1 + s^2)/(1 + s)^2<0$. This leads to the conclusion that the equation $G_4'(t)=0$ has
the unique root $t_s$ in $(0,1)$. This shows that $G_4(t)$ attains its maximum at $t_s$. From (\ref{g4}), we have
\beas\Vert P_f\Vert \leq G_4(t_s)=\frac{(1+t_s)\left(1-s+2s t_s\right)}{1+s t_s}+(1-t_s^2)\left(1-s+s t_s\right),\eeas
where $t_s\in(0, 1)$ is the unique root of the equation 
\be\label{g2} H_s(t):=\frac{-3 s^3 t^4+\left(2 s^3-8 s^2\right) t^3+\left(s^3+6 s^2-7 s\right) t^2+\left(2 s^2+6 s-2\right) t+s^2+s+1}{(1+s t)^2}=0.\ee
\indent To show that the estimate is sharp, we consider the function $f_2$ given by
\beas f_2(z)=z\;\mathrm{exp}\left(\int_0^z\frac {(1-t) (1+s t)-1} {t} dt\right).\eeas
Differentiate $f_2(z)$ twice with respect to $z$, we obtain
\beas f_2'(z)&=&(1-z) (1+s z)\;\exp\left(\int_0^z\frac {(1-t) (1+s t)-1} {t} dt\right)\quad\text{and}\\[2mm]
 f_2''(z)&=&\frac{(1-z) (1+s z)\left((1-z) (1+s z)-1\right)}{z}\exp\left(\int_0^z\frac {(1-t) (1+s t)-1} {t} dt\right)\\[2mm]
 &&+(s-1-2sz)\exp\left(\int_0^z\frac {(1-t) (1+s t)-1} {t} dt\right).\eeas
Hence, we have 
\beas \frac{f_2''(z)}{f_2'(z)}=\frac{\left((1-z) (1+s z)-1\right)}{z}+\frac{(s-1-2sz)}{(1-z) (1+s z)}=-\frac{1-s+2sz}{(1-z) (1+s z)}-(1-s+s z)\eeas
The pre-Schwarzian norm of $f_2$ is given by
\beas \Vert P_{f_2}\Vert =\sup_{z\in\mathbb{D}}\;(1-|z|^2)|P_{f_2}(z)|=\sup_{z\in\mathbb{D}}\;(1-|z|^2)\left|\frac{1-s+2sz}{(1-z) (1+s z)}+(1-s+s z)\right|.\eeas
On the positive real axis, we note that
\beas
&&\sup_{0\leq r<1}\left((1-r^2)\left(\frac{1-s+2sr}{(1-r) (1+s r)}+(1-s+s r)\right)\right)\\
&=&\frac{(1+r_s)\left(1-s+2s r_s\right)}{1+s r_s}+(1-r_s^2)\left(1-s+s r_s\right),\eeas
where $r_s\in(0, 1)$ is the unique root of the equation 
\beas -3 s^3 r^4+\left(2 s^3-8 s^2\right) r^3+\left(s^3+6 s^2-7 s\right) r^2+\left(2 s^2+6 s-2\right) r+s^2+s+1=0.\eeas
This completes the proof.
\end{proof}
\noindent In Table \ref{tab1} and Figure \ref{Fig5}, we obtain the values of $t_s$ and $\Vert P_f\Vert$ for certain values of $s\in[-1/3, 1/3]$.
\begin{table}[H]
\centering
\begin{tabular}{*{7}{|c}|}
\hline
$s$				&-1/3    				&1/3    		&1/5		&     -1/5			&-1/4			&1/9\\
\hline
$t_s$				&0.251584			&0.827189	&0.721552	&0.319062	&0.289353	&0.628095\\
\hline
$\Vert P_f\Vert$&2.76279			&2.04225	&2.09044	&2.53147		&2.61535	&2.14568\\
\hline
\end{tabular}
\caption{$t_s$ is the unique positive root of the equation (\ref{g2}) in $(0,1)$}
\label{tab1}\end{table}
\begin{figure}[H]
\centering
\includegraphics[scale=0.8]{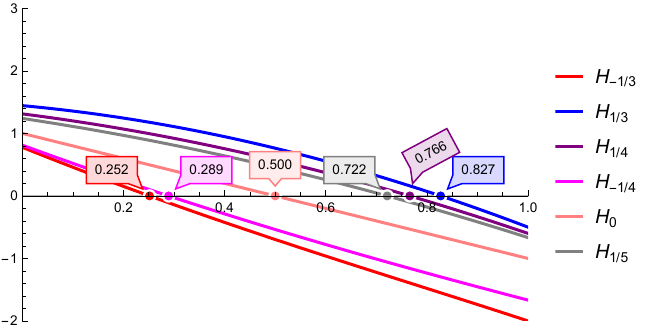}
\caption{Graph of $H_s(t)$ for different values of $s$ in $[-1/3, 1/3]$}
\label{Fig5}
\end{figure}
In the following result, we establish the pre-Schwarzian norm estimate for functions $f$ in the Ma-Minda type starlike class $\mathcal{S}^*_{cs}$.
\begin{theo}\label{Th1} Let $f\in\mathcal{S}^*_{cs}$. Then the pre-Schwarzian norm satisfies the following inequality
\beas\Vert P_f\Vert \leq \frac{2 (2-\alpha)}{(1+\alpha) (1-2 \alpha)}.\eeas
\end{theo}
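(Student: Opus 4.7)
The plan is to mirror the approach used in Theorem \ref{Th2}: write $P_f$ as a sum of two terms via logarithmic differentiation, bound each by replacing $\omega(z)$ with $|\omega(z)|$ through a combination of positive-Taylor-coefficient arguments and the triangle inequality, and finally reduce the problem to the one-variable maximization of a rational function of $t=|\omega(z)|$.

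First, writing $zf'(z)/f(z)=\varphi(\omega(z))$ with $\varphi(z)=1+z/((1-z)(1+\alpha z))$ and $\omega\in\mathcal{B}_0$, logarithmic differentiation gives $P_f(z)=(\varphi(\omega)-1)/z+\varphi'(\omega)\omega'/\varphi(\omega)$. A direct computation from $\varphi(z)=(1+\alpha z(1-z))/((1-z)(1+\alpha z))$ shows
\[
\frac{\varphi'(z)}{\varphi(z)}=\frac{1+\alpha z^2}{(1-z)(1+\alpha z)\bigl(1+\alpha z(1-z)\bigr)}.
\]
For the first summand, the expansion $\varphi(z)=1+(1/(1+\alpha))\sum_{n\ge 1}(1-(-1)^n\alpha^n)z^n$ from the introduction has non-negative coefficients, so $|\varphi(\omega)-1|\le |\omega|/((1-|\omega|)(1+\alpha|\omega|))$. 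For $\varphi'(\omega)/\varphi(\omega)$ I would estimate the factors separately: the partial-fraction expansion $1/((1-z)(1+\alpha z))=(1/(1+\alpha))(1/(1-z)+\alpha/(1+\alpha z))$ also has non-negative Taylor coefficients, which gives $|(1-\omega)(1+\alpha\omega)|\ge (1-|\omega|)(1+\alpha|\omega|)$; the remaining factor is handled by the elementary triangle inequality $|1+\alpha\omega(1-\omega)|\ge 1-\alpha|\omega|(1+|\omega|)$, together with $|1+\alpha\omega^2|\le 1+\alpha|\omega|^2$.

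Setting $t=|\omega(z)|\le r=|z|$ and invoking the Schwarz--Pick lemma in the form $(1-r^2)|\omega'(z)|\le 1-t^2$, these estimates combine to give
\[
(1-|z|^2)|P_f(z)|\le G(r,t):=\frac{(1-r^2)\,t}{r\,(1-t)(1+\alpha t)}+\frac{(1+t)(1+\alpha t^2)}{(1+\alpha t)\bigl(1-\alpha t(1+t)\bigr)}.
\]
Since $\partial_r G=-(1/r^2+1)\,t/((1-t)(1+\alpha t))<0$, the supremum over $r\in[t,1)$ is attained at $r=t$, and a short cancellation leaves
\[
G(t,t)=\frac{(1+t)(2-\alpha t)}{(1+\alpha t)\bigl(1-\alpha t(1+t)\bigr)},\qquad \lim_{t\to 1^-}G(t,t)=\frac{2(2-\alpha)}{(1+\alpha)(1-2\alpha)}.
\]

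What remains is to verify that $G(t,t)$ does not exceed this limit anywhere on $(0,1)$, which I expect to be the main obstacle. Mirroring the treatment of Theorem \ref{Th2}, I would differentiate $G(t,t)$ and reduce the sign of its derivative to the positivity of a polynomial in $t$ with coefficients depending on $\alpha\in[0,1/2]$, then split into sub-cases according to whether that polynomial has real roots inside $(0,1)$, resolving each sub-case by a sign test at a convenient point such as $t=1/2$. The denominator factor $1-\alpha t(1+t)$ vanishes precisely at $(\alpha,t)=(1/2,1)$, which both accounts for the blow-up of the stated bound as $\alpha\to 1/2$ and is consistent with the fact that this theorem, unlike the preceding two, is not claimed to be sharp.
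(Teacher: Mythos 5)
Your proposal follows essentially the same route as the paper: the same logarithmic-derivative decomposition of $P_f$, the same coefficient-positivity and triangle-inequality bounds for the two terms, the Schwarz--Pick lemma, monotone decrease in $r$, and reduction to the same one-variable function (the paper's $G_6(t)$ is exactly your $G(t,t)$ before combining fractions), with the bound obtained as the limit at $t\to 1^-$. The one step you leave as a plan is precisely what the paper does, and it needs no case-splitting: differentiating gives a numerator $-\alpha^3t^4+2\alpha^2(2-\alpha)t^3+\alpha(2+7\alpha-\alpha^2)t^2+2\alpha(1+2\alpha)t+(2-\alpha)$ over a positive denominator, whose only negative term $-\alpha^3t^4$ is dominated by the constant term $2-\alpha$ for $\alpha\in[0,1/2]$ and $t\in(0,1)$, so $G(t,t)$ is increasing on $(0,1)$ and your argument closes exactly as in the paper (which, like you note, does not claim sharpness here).
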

\begin{proof}
If $f\in\mathcal{S}^*_{cs}$, it follows from the definition of the class $\mathcal{S}^*_{cs}$ that
\beas\frac{zf'(z)}{f(z)}\prec 1+\frac{z}{(1-z) (1+\alpha z)}.\eeas
Thus, there exists a Schwarz function $\omega\in\mathcal{B}_0$ such that
\beas\frac{zf'(z)}{f(z)}=1+\frac{\omega(z)}{(1-\omega(z)) (1+\alpha \omega(z))}=\frac{1+\alpha \omega(z)-\alpha(\omega(z))^2}{(1-\omega(z)) (1+\alpha \omega(z))}.\eeas
Taking logarithmic derivative on both sides with respect to $z$, we obtain
\beas
\frac{f''(z)}{f'(z)}
&=&\frac{\omega'(z)(1+\alpha(\omega(z))^2)}{(1+\alpha \omega(z)-\alpha(\omega(z))^2)(1-\omega(z)) (1+\alpha \omega(z))}+\frac{\omega(z)}{z(1-\omega(z)) (1+\alpha \omega(z))}.\eeas
Note that $|\omega(z)|\leq |z|<1$ and $|1+\alpha \omega(z)(1-\omega(z))|\geq 1-\alpha |\omega(z)||1-\omega(z)|\geq 1-\alpha |\omega(z)|(1+|\omega(z)|)$.
Furthermore, the function 
\beas \frac{1}{(1-z) (1+\alpha z)}
=\frac{1}{\alpha+1}\sum_{n=0}^\infty \left(1+(-1)^n \alpha^{n+1}\right) z^n\eeas
has the positive Taylor coefficients about $z = 0$ for $0\leq \alpha\leq 1/2$. In view of  Schwarz-Pick lemma, we have
\beas (1-|z|^2)|P_f(z)|&=&(1-|z|^2)\left|\frac{\omega'(z)(1+\alpha(\omega(z))^2)}{(1+\alpha \omega(z)-\alpha(\omega(z))^2)(1-\omega(z)) (1+\alpha \omega(z))}\right.\\[2mm]
&&\left.+\frac{\omega(z)}{z(1-\omega(z)) (1+\alpha \omega(z))}\right|\\[2mm]
&\leq& \frac{(1-|z|^2)|\omega'(z)|\left(1+\alpha|\omega(z)|^2\right)}{\left(1-\alpha |\omega(z)|-\alpha|\omega(z)|^2\right)(1-|\omega(z)|) (1+\alpha |\omega(z)|)}\\[2mm]
&&+\frac{(1-|z|^2)|\omega(z)|}{|z|(1-|\omega(z)|) (1+\alpha |\omega(z)|)}\\[2mm]
&\leq& \frac{(1+|\omega(z)|)\left(1+\alpha|\omega(z)|^2\right)}{\left(1-\alpha |\omega(z)|-\alpha|\omega(z)|^2\right)(1+\alpha |\omega(z)|)}\\[2mm]
&&+\frac{(1-|z|^2)|\omega(z)|}{|z|(1-|\omega(z)|) (1+\alpha |\omega(z)|)}.\eeas
For $0< t:=|\omega(z)|\leq |z|<1$, we have
\beas (1-|z|^2)|P_f(z)|\leq  \frac{(1+t)\left(1+\alpha t^2\right)}{\left(1-\alpha t-\alpha t^2\right)(1+\alpha t)}+\frac{(1-|z|^2)t}{|z|(1-t) (1+\alpha t)}.\eeas
Therefore, the pre-Schwarzian norm for functions $f$ in the class $\mathcal{S}^*_{cs}$ is
\bea \label{e7}\Vert P_f\Vert=\sup_{z\in\mathbb{D}}\; (1-|z|^2)|P_f(z)|\leq \sup_{0\leq t\leq |z|<1} G_5(|z|,t),\eea
where 
\beas G_5(r, t)=\frac{(1+t)\left(1+\alpha t^2\right)}{\left(1-\alpha t-\alpha t^2\right)(1+\alpha t)}+\frac{(1-r^2)t}{r(1-t) (1+\alpha t)}\quad\text{for}\quad |z|=r.\eeas
The objective is to determine the supremum of $G_5(r, t)$ on $\Omega=\{(r, t): 0< t\leq r<1\}$.
Differentiating partially $G_5(r, t)$ with respect to $r$, we obtain
\beas\frac{\pa}{\pa r}G_5(r, t)=-\left(\frac{1}{r^2}+1\right)\frac{t}{(1-t) (1+\alpha t)}<0,\eeas
which shows that $G_5(r, t)$ is a monotonically decreasing function of $r\in[t,1)$. Hence, we have $G_5(r, t) \leq G_5(t, t)=G_6(t)$,
where
\beas G_6(t)=\frac{(1+t)\left(1+\alpha t^2\right)}{\left(1-\alpha t-\alpha t^2\right)(1+\alpha t)}+\frac{(1+t)}{(1+\alpha t)},\quad t\in(0,1).\eeas
Differentiate $G_6(t)$ with respect to $t$, we obtain
\beas G_6'(t)&=&\frac{-\alpha^3 t^4+\left(4 \alpha^2-2 \alpha^3\right) t^3+\left(-\alpha^3+7 \alpha^2+2 \alpha\right) t^2+\left(4 \alpha^2+2 \alpha\right) t+2-\alpha}{\left(1-\alpha t-\alpha t^2\right)^2(1+\alpha t)^2}\geq 0\eeas
for $t\in(0, 1)$ and $\alpha\in[0,1/2]$. Therefore, $G_6(t)$ is a monotonically increasing function of $t\in(0,1)$. 
From (\ref{e7}), we have
\beas\Vert P_f\Vert \leq \lim_{t\to1^-}G_6(t)=\frac{2}{\left(1-2\alpha\right)}+\frac{2}{(1+\alpha )}=\frac{2 (2-\alpha)}{(1+\alpha) (1-2 \alpha)}.\eeas
This completes the proof.
\end{proof}
In the following result, we establish the sharp estimate of the pre-Schwarzian norm for the functions in the Ma-Minda type convex class $\mathcal{C}_{con}$.
\begin{theo}
For any $f\in\mathcal{C}_{con}$, the pre-Schwarzian norm satisfies the following sharp inequality
\beas\Vert P_f\Vert\leq \dfrac{6}{3+\alpha}.\eeas
\end{theo}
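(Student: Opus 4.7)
The plan is to work directly from the subordination defining $\mathcal{C}_{con}$, mirroring the strategy used in Theorem \ref{Th2} but exploiting the simplification that $\omega'(z)$ will not appear. If $f \in \mathcal{C}_{con}$, then there exists $\omega \in \mathcal{B}_0$ with
\[
1 + \frac{z f''(z)}{f'(z)} = \frac{3}{3 + (\alpha-3)\omega(z) - \alpha\,\omega(z)^2} = \frac{3}{(1-\omega(z))\bigl(3 + \alpha\,\omega(z)\bigr)},
\]
using the factorization $3+(\alpha-3)z-\alpha z^2 = (1-z)(3+\alpha z)$. Solving for the pre-Schwarzian gives
\[
P_f(z) = \frac{1}{z}\left(\frac{3}{(1-\omega(z))(3+\alpha\,\omega(z))} - 1\right),
\]
which is free of $\omega'(z)$, so only the Schwarz lemma $|\omega(z)|\le|z|$ will be needed.

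I would then reuse the observation (see \eqref{g1} in the proof of Theorem \ref{Th2}) that $3/(3+(\alpha-3)z-\alpha z^2)$ has non-negative Taylor coefficients about $0$ for every $\alpha\in(-3,1]$. A termwise triangle inequality, with $t := |\omega(z)| \le |z| =: r$, yields
\[
(1-r^2)|P_f(z)| \le G(r, t) := \frac{1-r^2}{r}\left(\frac{3}{(1-t)(3+\alpha t)} - 1\right).
\]
Since $(1-r^2)/r$ is strictly decreasing in $r$, $G(\cdot,t)$ is decreasing on $[t,1)$, whence $G(r,t) \le G(t,t)$. Using $3-(1-t)(3+\alpha t)=t(3-\alpha+\alpha t)$, the factor $1-t$ cancels and the bound collapses to the clean one-variable majorant
\[
H(t) = \frac{(1+t)(3 - \alpha + \alpha t)}{3 + \alpha t}, \qquad t \in (0, 1).
\]

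The one-variable task is to prove $H(t) \le 6/(3+\alpha)$ with equality in the limit $t\to 1^-$. To sidestep the $\alpha$-case analysis that Theorem \ref{Th2} required, I would factor $H$ as a product and check monotonicity of each piece: $1+t$ is positive and increasing, while a short sign check shows $(3-\alpha+\alpha t)/(3+\alpha t)$ is positive on $(-3,1]\times[0,1)$ and has derivative $\alpha^2/(3+\alpha t)^2 \ge 0$. A product of two positive non-decreasing functions is non-decreasing, so $\sup_{t\in[0,1)}H(t) = H(1^-) = 6/(3+\alpha)$.

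For sharpness, take $\omega(z)=z$, giving the extremal function $f^\ast$ determined by
\[
(f^\ast)'(z) = \exp\!\int_0^z \frac{3-\alpha+\alpha\tau}{(1-\tau)(3+\alpha\tau)}\,d\tau;
\]
then $P_{f^\ast}(z) = (3-\alpha+\alpha z)/((1-z)(3+\alpha z))$, and evaluating on the positive real axis recovers $(1-r^2)P_{f^\ast}(r) = H(r) \to 6/(3+\alpha)$. I do not anticipate a serious obstacle: the absence of $\omega'(z)$ already trivialises the reduction to $H$, and the product decomposition above removes the need for the discriminant and root-counting case analysis that was the delicate step in $\mathcal{S}^*_{con}$.
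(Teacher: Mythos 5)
Your proposal is correct, and its overall route coincides with the paper's: the same factorization $3+(\alpha-3)z-\alpha z^2=(1-z)(3+\alpha z)$, the same use of the Schwarz lemma together with the positivity of the Taylor coefficients of $3/\big(3+(\alpha-3)z-\alpha z^2\big)$ to pass to moduli, the same one-variable majorant $H(t)=(1+t)(3-\alpha+\alpha t)/(3+\alpha t)$ (the paper's $G_7$, reached there by substituting $|\omega(z)|\le |z|$ directly rather than via your intermediate two-variable $G(r,t)$, a cosmetic difference), and the same extremal function with $\omega(z)=z$ evaluated on the positive real axis. The one genuine point of divergence is the monotonicity step: the paper differentiates $G_7$, obtains the quadratic $\alpha^2 r^2+6\alpha r+\alpha^2-3\alpha+9$ in the numerator, and runs a case analysis ($\alpha=0$, $-3<\alpha<0$, $0<\alpha\le 1$) with explicit roots and sign checks, whereas you factor $H(t)=(1+t)\cdot\dfrac{3-\alpha+\alpha t}{3+\alpha t}$ and observe that the second factor is positive with derivative $\alpha^2/(3+\alpha t)^2\ge 0$, so $H$ is a product of positive nondecreasing functions and $\sup_{[0,1)}H=H(1^-)=6/(3+\alpha)$ at once. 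Your argument is valid (positivity of $3-\alpha+\alpha t$ and $3+\alpha t$ on $(-3,1]\times[0,1)$ is immediate) and removes the discriminant and root-counting discussion entirely, so it is a cleaner way to finish the same proof; it buys simplicity but no additional generality.
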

\begin{proof}
Let $f\in\mathcal{C}_{con}$, it follows from the definition of the class $\mathcal{C}_{con}$ that
\beas 1+\frac{zf''(z)}{f'(z)}\prec \frac{3}{3+(\alpha-3)z-\alpha z^2}.\eeas
Thus, there exists a function $\omega(z)\in\mathcal{B}_0$ such that
\beas1+\frac{zf''(z)}{f'(z)}=\frac{3}{3+(\alpha-3)\omega(z)-\alpha \omega(z)^2}.\eeas
Note that $|\omega(z)|\leq |z|<1$ and from \eqref{g1}, it is easy to see that the function $3/(3+(\alpha-3)z-\alpha z^2)$ has the positive Taylor coefficients about $z = 0$ for $-3<\alpha\leq 1$. Thus, we have 
\beas\frac{3}{3+(\alpha-3)|\omega(z)|-\alpha |\omega(z)|^2}-1&=&\frac{3}{\alpha+3} \sum_{n=1}^\infty \left(1+(-1)^{n}\left(\frac{\alpha}{3}\right)^{n+1}\right)|\omega(z)|^n\\
&\leq&\frac{3}{\alpha+3} \sum_{n=1}^\infty \left(1+(-1)^{n}\left(\frac{\alpha}{3}\right)^{n+1}\right)|z|^n\\
&=&\frac{3}{3+(\alpha-3)|z|-\alpha |z|^2}-1.\eeas
Thus, we have
\beas
(1-|z|^2)|P_f|=(1-|z|^2)\left|\frac{f''(z)}{f'(z)}\right|&=&(1-|z|^2)\left|\frac{1}{z}\left(\frac{3}{3+(\alpha-3)\omega(z)-\alpha \omega(z)^2}-1\right)\right|\\[2mm]
&\leq&\frac{(1-|z|^2)}{|z|}\left(\frac{3}{3+(\alpha-3)|\omega(z)|-\alpha |\omega(z)|^2}-1\right)\\[2mm]
&\leq &(1+|z|)\frac{(3-\alpha+\alpha |z|)}{(3+\alpha |z|)}.\eeas
Therefore, the pre-Schwarzian norm for functions $f$ in the class $\mathcal{C}_{con}$ is
\bea\label{f2} \Vert P_f\Vert=\sup_{z\in\mathbb{D}}(1-|z|^2)|P_f(z)|\leq \sup_{0\leq|z|<1} G_7(|z|),\eea
where 
\beas G_7(r)=\frac{(1+r)(3-\alpha+\alpha r)}{ (3+\alpha r)}\quad \mathrm{for}~|z|=r\in[0, 1).\eeas 
Now we consider the following cases.\\
{\bf Case 1.} If $\alpha=0$, then from (\ref{f2}), we have $G_7(r)=1+r$. Hence, we have $\Vert P_f\Vert\leq 2$.\\
 {\bf Case 2.} If $\alpha\not=0$.
Differentiate $G_7(r)$ with respect to $r$, we obtain
\beas G_7'(r)&=&\frac{\alpha^2 r^2+6 \alpha r+\alpha^2-3 \alpha+9}{(3+\alpha r)^2}.\eeas
The roots of the equation $G_7'(r)=0$ are
\beas r_1=\frac{-3 \alpha-\sqrt{3 \alpha^3- \alpha^4}}{\alpha^2}\quad\text{and}\quad r_2=\frac{-3 \alpha+\sqrt{3 \alpha^3- \alpha^4}}{\alpha^2}.\eeas
It is evident that $3 \alpha^3-\alpha^4\geq 0$ for $0\leq \alpha\leq 1$.\\
{\bf Sub-case 2.1.} If $-3<\alpha <0$, then the equation $G_7'(r)=0$ has no real root and $G_7'(1/3)=(10 \alpha^2-9 \alpha+81)/(\alpha+9)^2>0$ for $-3<\alpha <0$. Hence, $G_7'(r)>0$ for $r\in[0,1)$, which shows that $G_7(r)$ is a monotonically increasing function of $r$. From (\ref{f2}), we have 
\beas \Vert P_f\Vert\leq \lim_{r\to 1^-}G_7(r)=\frac{6}{3+\alpha}.\eeas
{\bf Sub-case 2.2.} Let $0< \alpha\leq 1$.  It is evident that $r_1<0$. We also claim that $r_2<0$. Note that 
$r_2<0$ is equivalent to the inequality
\beas -9 \alpha^2+3 \alpha^3- \alpha^4<0,~\text{which is true for $\alpha\in(0, 1]$.}\eeas
 Therefore, the equation $G_7'(r)=0$ has no positive root in $(0, 1)$ and $G_7'(1/3)=(10 \alpha^2-9 \alpha+81)/(\alpha+9)^2>0$ for $0<\alpha \leq 1$. Hence, $G_7'(r)>0$ for $r\in(0,1)$, which shows that $G_7(r)$ is a monotonically increasing function of $r$. From (\ref{f2}), we have 
\beas \Vert P_f\Vert\leq \lim_{r\to 1^-}G_7(r)=\frac{6}{3+\alpha}.\eeas
\indent To show that the estimate is sharp, let us consider the function $f_3$ defined by
\beas f_3(z)=\int_0^z\mathrm{exp}\left(\int_0^u\frac {3 - \alpha + \alpha t} {(1 - t) (3 + \alpha t)} dt\right)du.\eeas
The pre-Schwarzian norm of $f_3$ is given by
\beas \Vert P_{f_3}\Vert =\sup_{z\in\mathbb{D}}(1-|z|^2)|P_{f_4}(z)|=\sup_{z\in\mathbb{D}}(1-|z|^2)\left|\frac{3-\alpha+\alpha z}{(1-z) (3+\alpha z)} \right|.\eeas
On the positive real axis, we have
\beas \sup_{0\leq r<1}(1-r^2)\frac{3-\alpha+\alpha r}{(1-r) (3+\alpha r)}=
\dfrac{6}{3+\alpha}.\eeas
This completes the proof.
\end{proof}
In the following result, we establish the sharp estimate of the pre-Schwarzian norm for functions in the Ma-Minda type convex class $\mathcal{C}_{lim}$.
\begin{theo}
For any $f\in\mathcal{C}_{lim}$, the pre-Schwarzian norm satisfies the following sharp inequality
\beas \Vert P_f\Vert\leq\left\{\begin{array}{cll}
(1-r_s^2)(1-s+s r_s)&\text{for}\quad s\in(0,1/3],\\[2mm]
1-s&\text{for}\quad s\in[-1/3, 0],
\end{array}\right.\eeas
where $r_s$ is the unique positive root of the equation $3sr^2+2(1-s)r-s=0$ in $(0, 1)$.
\end{theo}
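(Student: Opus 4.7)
The plan follows the pattern of the three preceding theorems. By the definition of $\mathcal{C}_{lim}$, there exists a Schwarz function $\omega\in\mathcal{B}_0$ with $1+zf''(z)/f'(z)=(1-s\omega(z))(1+\omega(z))$. Expanding and dividing by $z$ yields the explicit formula
\[
P_f(z)=\frac{f''(z)}{f'(z)}=\frac{(1-s)\omega(z)-s\,\omega(z)^2}{z}.
\]
Unlike in the two starlike theorems preceding, this pre-Schwarzian depends only on $\omega$ and not on $\omega'$, so the Schwarz-Pick estimate is replaced by the simpler bound $|\omega(z)|\le|z|$ from Schwarz's lemma.

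Applying the triangle inequality together with the Schwarz estimate, and setting $t=|\omega(z)|$, $r=|z|$ with $0\le t\le r<1$, I would arrive at an upper bound of the shape
\[
(1-|z|^2)|P_f(z)|\le \frac{(1-r^2)}{r}\bigl[(1-s)t+s\,t^2\bigr]=:G(r,t),
\]
where the bracket is kept in its natural signed form. A direct calculation shows $\partial G/\partial r=-(1/r^2+1)\bigl[(1-s)t+st^2\bigr]<0$, so $G$ is decreasing in $r$ for fixed $t$; the supremum over $\{0<t\le r<1\}$ is therefore attained on the diagonal $r=t$, reducing the problem to maximizing
\[
g(t):=G(t,t)=(1-t^2)(1-s+st),\qquad t\in(0,1),
\]
whose derivative is $g'(t)=s-2(1-s)t-3st^2$, with zeros given by $3st^2+2(1-s)t-s=0$.

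The remaining analysis splits by the sign of $s$. For $s\in(0,1/3]$, one has $g'(0)=s>0$ and $g'(1)=-2<0$, so by continuity there is a unique critical point $r_s\in(0,1)$ satisfying the displayed quadratic, and $g$ attains its maximum $(1-r_s^2)(1-s+sr_s)$ at $r_s$. For $s\in[-1/3,0]$, I would verify that $3st^2+2(1-s)t-s$ has no root in $(0,1)$: its value at $t=0$ is $-s\ge 0$, its value at $t=1$ is $2>0$, and for $s<0$ the downward-opening parabola has vertex at $t=(s-1)/(3s)>1$. Consequently $g'(t)\le 0$ throughout $[0,1)$, so $g$ is nonincreasing and $\sup_{t\in(0,1)}g(t)=\lim_{t\to 0^+}g(t)=1-s$.

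Sharpness should follow by taking the function $F(z)=\int_0^z\exp\bigl(-(1-s)u-(s/2)u^2\bigr)\,du$, corresponding to the Schwarz function $\omega(z)=-z$, which lies in $\mathcal{C}_{lim}$ and has pre-Schwarzian $P_F(z)=-(1-s)-sz$; evaluating $\sup_{0\le r<1}(1-r^2)|(1-s)+sr|$ on the positive real axis reproduces the claimed bound. The main subtlety I anticipate is justifying the signed bracket $(1-s)t+st^2$ in the upper bound for $s\le 0$: the raw triangle inequality yields $(1-s)t+|s|t^2$ instead, so for the $s\le 0$ case one likely needs to invoke the nonnegativity of the Taylor coefficients of $\varphi(z)-1=(1-s)z-sz^2$ when $s\le 0$ (the same device used in the proof of the preceding theorem on $\mathcal{C}_{con}$) in order to recover the form of $g$ that is used above.
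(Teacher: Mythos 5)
Your outline coincides with the paper's own argument: same reduction $P_f(z)=\bigl((1-s)\omega(z)-s\,\omega(z)^2\bigr)/z$, same bound via $|\omega(z)|\le|z|$, same function $g(t)=(1-t^2)(1-s+st)$, same case split, and the same extremal function (your $F$ is the paper's $f_4$). For $s\in(0,1/3]$ this is complete and correct, since there the triangle inequality genuinely gives $(1-s)t+st^2$ and your critical-point analysis matches the paper's.

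The gap is exactly the point you flagged, and your proposed repair does not close it. For $s\in[-1/3,0)$ the coefficients of $\varphi(z)-1=(1-s)z-sz^2$ are indeed nonnegative, but the positive-coefficient device only yields $|(1-s)\omega-s\omega^2|\le(1-s)|\omega|+|s||\omega|^2=(1-s)|\omega|-s|\omega|^2$, i.e. precisely the same unsigned bound as the raw triangle inequality; it cannot produce the signed bracket $(1-s)t+st^2$, which for $s<0$ is strictly smaller than the true modulus already for $\omega(z)=-z$ on the negative real axis. With the correct bound the quantity to maximize is $(1-\rho^2)\bigl(1-s+|s|\rho\bigr)$, whose supremum exceeds $1-s$ (its derivative at $\rho=0$ is $|s|>0$); and this larger value is actually attained in the class, because your own sharpness function $F$ satisfies $(1-|z|^2)|P_F(z)|=(1-|z|^2)|(1-s)+sz|$, whose supremum over the disk is $\sup_{0\le\rho<1}(1-\rho^2)(1-s+|s|\rho)>1-s$ when $s<0$ (the maximum on $|z|=\rho$ sits on the negative real axis, so restricting the sharpness computation to the positive real axis conceals this). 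Hence the case $s\in[-1/3,0)$ of your argument cannot be completed as written: the signed bracket is unjustifiable there, and any correct treatment of that range must work with $(1-s)t+|s|t^2$, which leads to an interior maximum analogous to the $s>0$ case rather than to the value $1-s$. Note that the paper's proof performs the identical signed-bracket step without comment, so your attempt mirrors it; but the step you singled out is a genuine failure point, not a removable technicality.
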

\begin{proof}
Let $f\in\mathcal{C}_{lim}$, it follows from the definition of the class $\mathcal{C}_{lim}$ that
\beas 1+\frac{zf''(z)}{f'(z)}\prec (1+z) (1-s z).\eeas
Thus, there exists a function $\omega(z)\in\mathcal{B}_0$ such that
\beas1+\frac{zf''(z)}{f'(z)}=(1+\omega(z)) (1-s \omega(z)),\quad\text{\it i.e.,}\quad\frac{f''(z)}{f'(z)}=\frac{1}{z}\left((1-s\omega(z))(1+\omega(z))-1\right).\eeas
Note that $|\omega(z)|\leq |z|<1$.
Thus, we have
\beas
(1-|z|^2)|P_f|=(1-|z|^2)\left|\frac{f''(z)}{f'(z)}\right|&=&(1-|z|^2)\left|\frac{1}{z}\left((1-s\omega(z))(1+\omega(z))-1\right)\right|\\
&\leq&\frac{(1-|z|^2)}{|z|}\left((1-s)|\omega(z)|+s|\omega(z)|^2\right)\\
&\leq&(1-|z|^2)(1-s+s |z|).\eeas
Therefore, the pre-Schwarzian norm for the functions $f$ in the class $\mathcal{C}_{lim}$ is
\beas \Vert P_f\Vert=\sup_{z\in\mathbb{D}}(1-|z|^2)|P_f(z)|\leq \sup_{0\leq|z|<1} G_8(|z|),\eeas
where $G_8(r)=(1-r^2)(1-s+s r)$ for $|z|=r\in[0, 1)$.
The following cases occur.\\
{\bf Case 1.} Let $s=0$. Then, $G_8(r)=(1-r^2)$ and hence, we have $\Vert P_f\Vert\leq 1$.\\
{\bf Case 2.} Let $s\in(0,1/3]$. Differentiate $G_8(r)$ with respect to $r$, we obtain
\beas G_8'(r)=-3sr^2-2(1-s)r+s\quad\text{and}\quad G_8''(r)=-6sr-2(1-s)<0.\eeas 
Therefore, $G_8'(r)$ is a monotonically decreasing function of $r$ with
$G_8'(0)=s>0$ and $\lim_{r\to1^-}G_8'(r)=-2<0$. This leads us to the conclusion that the equation $G_8'(r)=0$ has the unique root $r_s$ in $(0,1)$ and thus, 
$G_8(r)$ attains its maximum value at $r_s$. Hence, we have 
$$\Vert P_f\Vert\leq (1-r_s^2)(1-s+s r_s),$$ 
where $r_s$ is the unique positive root of the equation $3sr^2+2(1-s)r-s=0$ in $(0, 1)$.\\
{\bf Case 3.} Let $s\in[-1/3,0)$. Using similar argument as in the \textrm{Case 2}, we have $G_8''(r)<0$ and $G_8'(r)$ is a monotonically decreasing function of $r$.
The roots of the equation $G_8'(r)=0$ are 
\beas r_1(s)=\frac{s-1-\sqrt{4 s^2-2 s+1}}{3 s}\quad\text{and}\quad r_2(s)=\frac{s-1+\sqrt{4 s^2-2 s+1}}{3 s}.\eeas
As $s\in[-1/3, 0)$, it is evident that $4 s^2-2 s+1>1$. We claim that $r_1(s)>1$ and $r_2(s)<0$ for $s\in[-1/3, 0)$. The inequality $r_1(s)>1$ is equivalent to the inequality 
$1+2s+\sqrt{4 s^2-2 s+1}>0$, which is true for $s\in[-1/3, 0)$. Again the inequality $r_2(s)<0$ is equivalent to the inequality $3s^2>0$, which is true for $s\in[-1/3, 0)$. Thus, 
the equation $G_8'(r)=0$ has no roots in $(0, 1)$. Note that $G_8'(1/2)=-(4-5 s)/4<0$ and it follows that $G_8'(r)<0$ for $r\in[0, 1)$.  
Hence, we have $\Vert P_f\Vert\leq 1-s$.\\
From \textrm{Case 1}, \textrm{Case 2} and \textrm{Case 3}, we obtain
\beas \Vert P_f\Vert\leq\left\{\begin{array}{cll}
(1-r_s^2)(1-s+s r_s)&\text{for}\quad s\in(0,1/3],\\[2mm]
1-s&\text{for}\quad s\in[-1/3, 0],
\end{array}\right.\eeas
where $r_s$ is the unique positive root of the equation $3sr^2+2(1-s)r-s=0$ in $(0, 1)$.\\[2mm]
\indent To show that the estimate is sharp, we consider the function $f_4$ given by
\beas f_4(z)=\int_0^z \mathrm{exp}\left(\int_0^u \frac{(1-t) (1+s t)-1}{t}dt\right)du.\eeas
Thus, we have
\beas \frac{f_4''(z)}{f_4'(z)}=\frac{(1-z) (1+s z)-1}{z}=-(1-s+sz)\eeas
The pre-Schwarzian norm of $f_4$ is given by
\beas \Vert P_{f_4}\Vert =\sup_{z\in\mathbb{D}}(1-|z|^2)|P_{f_4}(z)|=\sup_{z\in\mathbb{D}}(1-|z|^2)\left|(1-s+sz)\right|.\eeas
On the positive real axis, we note that
\beas\sup_{0\leq r<1}\left((1-r^2)(1-s+s r)\right)=\left\{\begin{array}{cll}
(1-r_s^2)(1-s+s r_s)&\text{for}\quad s\in(0,1/3],\\[2mm]
1-s&\text{for}\quad s\in[-1/3, 0],
\end{array}\right.\eeas
where $r_s$ is the unique positive root of the equation $3sr^2+2(1-s)r-s=0$ in $(0, 1)$.
This completes the proof.\end{proof}
\noindent In Table \ref{tab2}, we obtain the values of $r_s$ and $\Vert P_f\Vert$ for certain values of $s\in(0, 1/3]$.
\begin{table}[H]
\centering
\begin{tabular}{*{6}{|c}|}
\hline
$s$				&1/3    				&1/5			&     1/7			&2/9	\\
\hline
$r_s$				&0.21525		&0.119633		&0.081666	&0.135042\\
\hline
$\Vert P_f\Vert$&0.704204			&0.812135	&0.863015		&0.793056\\	
\hline
\end{tabular}
\caption{$r_s$ is the unique positive root of $3sr^2+2(1-s)r-s=0$ in $(0, 1)$}
\label{tab2}\end{table}
In the following result, we establish the sharp estimate of the pre-Schwarzian norm for functions in the Ma-Minda type convex class $\mathcal{C}_{cs}$.
\begin{theo}
For any $f\in\mathcal{C}_{cs}$, the pre-Schwarzian norm satisfies the following sharp inequality
\beas\Vert P_f\Vert  \leq \frac{2}{1+\alpha}.\eeas
\end{theo}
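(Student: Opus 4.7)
The plan is to proceed exactly as in the proofs of Theorem 3.3 and the earlier Ma-Minda convex-class theorems in the paper. Since $f\in\mathcal{C}_{cs}$, the subordination principle yields a Schwarz function $\omega\in\mathcal{B}_0$ with
\[
1+\frac{zf''(z)}{f'(z)}=1+\frac{\omega(z)}{(1-\omega(z))(1+\alpha\omega(z))},
\]
so that $P_f(z)=\omega(z)/\bigl[\,z(1-\omega(z))(1+\alpha\omega(z))\,\bigr]$. The key mechanism, already exploited in the proof of Theorem 3.3, is that the Taylor expansion of $1/((1-z)(1+\alpha z))$ about the origin has non-negative coefficients for every $\alpha\in[0,1/2]$; applying the triangle inequality to that power series therefore gives
\[
\left|\frac{1}{(1-\omega(z))(1+\alpha\omega(z))}\right|\le\frac{1}{(1-|\omega(z)|)(1+\alpha|\omega(z)|)}.
\]
Combined with the Schwarz lemma $|\omega(z)|\le|z|$, this delivers the pointwise estimate
\[
(1-|z|^2)|P_f(z)|\le\frac{(1-|z|^2)\,|\omega(z)|}{|z|\,(1-|\omega(z)|)(1+\alpha|\omega(z)|)}.
\]

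Setting $r=|z|$, $t=|\omega(z)|$ with $0<t\le r<1$, I would then reduce the problem to finding the supremum of
\[
G(r,t):=\frac{(1-r^2)\,t}{r\,(1-t)(1+\alpha t)}
\]
over the triangle $\{(r,t):0<t\le r<1\}$. A single partial derivative computation gives $\partial_r G(r,t)=-(1/r^2+1)\cdot t/[(1-t)(1+\alpha t)]<0$, so $G$ is strictly decreasing in $r$ on $[t,1)$ and the supremum is attained on the diagonal $r=t$, collapsing to the one-variable function
\[
H(t):=G(t,t)=\frac{1-t^2}{(1-t)(1+\alpha t)}=\frac{1+t}{1+\alpha t},\qquad t\in(0,1).
\]
A one-line derivative yields $H'(t)=(1-\alpha)/(1+\alpha t)^2\ge 0$ for $\alpha\in[0,1/2]$, so $H$ is monotone non-decreasing and $\sup_{t\in(0,1)}H(t)=\lim_{t\to 1^-}H(t)=2/(1+\alpha)$, giving the claimed bound.

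For the sharpness, I would test the natural extremal function
\[
f_5(z)=\int_0^z\exp\!\left(\int_0^u\frac{dt}{(1-t)(1+\alpha t)}\right)du,
\]
obtained by taking $\omega(z)=z$ in the subordination. A direct differentiation gives $P_{f_5}(z)=1/((1-z)(1+\alpha z))$, and evaluating on the positive real axis recovers exactly $(1-r^2)/[(1-r)(1+\alpha r)]=(1+r)/(1+\alpha r)\to 2/(1+\alpha)$ as $r\to1^-$, so the bound is attained in the limit. I do not anticipate any serious obstacle here: the positive-Taylor-coefficient bound is already established in the proof of Theorem 3.3, and the reduced function $H$ is manifestly monotone, so this theorem is essentially the cleanest case of the family and should require only a short proof.
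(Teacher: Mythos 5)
Your proposal is correct and follows essentially the paper's own argument: the same subordination step giving $P_f(z)=\omega(z)/\bigl(z(1-\omega(z))(1+\alpha\omega(z))\bigr)$, the same positive-Taylor-coefficient bound for $0\le\alpha\le 1/2$, the same final monotone function $(1+t)/(1+\alpha t)$ with supremum $2/(1+\alpha)$, and the same extremal function $f_5$ for sharpness. The only cosmetic difference is that the paper uses the positive coefficients to replace $|\omega(z)|$ by $|z|$ at once, collapsing immediately to the one-variable function $G_9(r)=(1+r)/(1+\alpha r)$, whereas you run the two-variable supremum over $\{(r,t):0<t\le r<1\}$ with the $\partial_r$ monotonicity as in the starlike theorems; the two reductions are equivalent.
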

\begin{proof}
Let $f\in\mathcal{C}_{cs}$, it follows from the definition of the class $\mathcal{C}_{cs}$ that
\beas 1+\frac{zf''(z)}{f'(z)}\prec 1+\frac{z}{(1-z) (1+\alpha z)}.\eeas
Thus, there exists a function $\omega(z)\in\mathcal{B}_0$ such that
\beas1+\frac{zf''(z)}{f'(z)}=1+\frac{\omega(z)}{(1-\omega(z)) (1+\alpha \omega(z))},\quad\text{\it i.e.,}\quad\frac{f''(z)}{f'(z)}=\frac{\omega(z)}{z(1-\omega(z)) (1+\alpha \omega(z))}.\eeas
Note that $|\omega(z)|\leq |z|<1$ and the function 
\beas \frac{z}{(1-z)(1+\alpha z)}=\frac{1}{1+\alpha} \sum_{n=1}^\infty \left(1-(-1)^{n}\alpha^{n}\right)z^n\eeas
has the positive Taylor coefficients about $z = 0$ for $0\leq \alpha\leq 1/2$. Therefore
\beas\frac{|\omega(z)|}{(1-|\omega(z)|) (1+\alpha |\omega(z)|)}&=&\frac{1}{1+\alpha} \sum_{n=1}^\infty \left(1-(-1)^{n}\alpha^{n}\right)|\omega(z)|^n\\
&\leq&\frac{1}{1+\alpha} \sum_{n=1}^\infty \left(1-(-1)^{n}\alpha^{n}\right)|z|^n=\frac{|z|}{(1-|z|)(1+\alpha |z|)}.\eeas
Thus, we have
\beas
(1-|z|^2)|P_f|=(1-|z|^2)\left|\frac{f''(z)}{f'(z)}\right|&=&(1-|z|^2)\left|\frac{\omega(z)}{z(1-\omega(z)) (1+\alpha \omega(z))}\right|\\
&\leq&(1-|z|^2)\left(\frac{|\omega(z)|}{|z|(1-|\omega(z)|) (1+\alpha |\omega(z)|)}\right)\\
&\leq &\frac{(1-|z|^2)}{(1-|z|) (1+\alpha|z|)}.\eeas
Therefore, the pre-Schwarzian norm for functions $f$ in the class $\mathcal{C}_{cs}$ is
\beas \Vert P_f\Vert=\sup_{z\in\mathbb{D}}(1-|z|^2)|P_f(z)|\leq \sup_{0\leq|z|<1} G_9(|z|),\eeas
where $G_9(r)=(1+r)/(1+\alpha r)$ for $|z|=r\in[0, 1)$.
It is evident that $G_9'(r)=(1-\alpha)/(1+\alpha r)^2>0$ and hence, we have $\Vert P_f\Vert\leq 2/(1+\alpha)$.\\[2mm]
\indent To show that the estimate is sharp, we consider the function $f_5$ given by
\beas f_5(z)=\int_0^z \mathrm{exp}\left(\int_0^u \frac {dt} {(1-t) (1+\alpha t)}\right)du.\eeas
The pre-Schwarzian norm of $f_5$ is given by
\beas \Vert P_{f_5}\Vert =\sup_{z\in\mathbb{D}}(1-|z|^2)|P_{f_5}(z)|=\sup_{z\in\mathbb{D}}(1-|z|^2)\left|\frac{1}{(1-z)(1+\alpha z)}\right|.\eeas
On the positive real axis, we note that
\beas\sup_{0\leq r<1}(1-r^2)\frac{1}{(1-r)(1+\alpha r)}=\frac{2}{1+\alpha}.\eeas
This completes the proof.\end{proof}
\section*{Declarations}
\noindent{\bf Acknowledgment:} The work of the author is supported by University Grants Commission (IN) fellowship (No. F. 44 - 1/2018 (SA - III)).\\[2mm]
{\bf Conflict of Interest:} The author declare that there are no conflicts of interest regarding the publication of this paper.\\[2mm]
{\bf Availability of data and materials:} Not applicable

\end{document}